\newtheorem{thm}{Theorem}[section]
\newtheorem{lem}[thm]{Lemma}
\newtheorem{prop}[thm]{Proposition}
\theoremstyle{definition}
\newtheorem{defn}[thm]{Definition}
\theoremstyle{remark}
\newtheorem{ex}[thm]{Example}
\newcommand{\s}{\psi}
\newcommand{\p}{\varphi}
\newcommand{\R}{\mathbb{R}}
\newcommand{\N}{\mathbb{N}}
\newcommand{\fr}{\vec \varphi}
\newcommand{\rank}{{\mathrm{rank}}}
\begin{document}

\title{Multidimensional persistent homology is stable}

\author[A. Cerri]{A. Cerri}
\author[B. Di Fabio]{B. Di Fabio}
\author[M. Ferri]{M. Ferri}
\author[P. Frosini]{P. Frosini}
\address{ARCES and Dipartimento di Matematica, Universit\`a di Bologna, Italia}
\email{\{cerri,difabio,ferri,frosini\}@dm.unibo.it}
\thanks{Research carried out under the auspices of
INdAM-GNSAGA. The last author partially carried out the research
within the activity of ARCES ``E. De Castro'', University of
Bologna.}
\author[C. Landi]{C. Landi}
\address{Dipartimento di Scienze e Metodi dell'Ingegneria,
Universit\`a di Modena e Reggio Emilia, Italia}
\email{clandi@unimore.it}

\keywords{Multidimensional persistence, persistence diagram, size
function, \v{C}ech homology, foliation}

\subjclass[2000]{Primary 55N05; Secondary 55U99, 68T10.}

\begin{abstract}
Multidimensional persistence studies topological features of
shapes by analyzing the lower level sets of vector-valued
functions. The rank invariant completely determines the
multidimensional analogue of persistent homology groups. We prove
that multidimensional rank invariants are stable with respect to
function perturbations. More precisely, we construct a distance
between rank invariants such that small changes of the function
imply only small changes of the rank invariant. This result can be
obtained by assuming the function to be just continuous.
Multidimensional stability opens the way to a stable shape
comparison methodology based on multidimensional persistence.
\end{abstract}

\maketitle

\section*{Introduction}
The study of the topology of  data is attracting more and more
attention from the mathematical community. This challenging
subject of research is motivated by the large amount of scientific
contexts where it is required to deal with qualitative geometric
information. Indeed, the topological approach allows us to greatly
reduce the complexity of the data by focusing the analysis just on
their relevant part. This research area is widely discussed in
\cite{BiDe*08,Ca09}.

In this line of research, persistent homology has been revealed to
be a key mathematical method for studying the topology of data,
with applications in an increasing number of fields, ranging from
shape description (e.g.,
\cite{CaZo*05,CeFeGi06,MoSaSa08,VeUrFrFe93}) to data
simplification \cite{EdLeZo02} and hole detection in sensor
networks \cite{DeGh07}. Recent surveys on the topic include
\cite{EdHa08,Gh08,Zo05}. Persistent homology describes topological
events occurring through the filtration of a topological space
$X$. Filtrations are usually expressed by real functions
$\p:X\to\R$. The main idea underlying this approach is that the
most important piece of information enclosed in geometrical data
is usually the one that is ``\emph{persistent}'' with respect to
small changes of the function defining the filtration.

The analysis of persistent topological events in the lower level
sets of the functions (e.g., creation, merging, cancellation of
connected components, tunnels, voids) is important for capturing a
global description of the data under study. These events can be
encoded in the form of a parameterized version of the Betti
numbers, called a {\em rank invariant} \cite{CaZo09} (already
known in the literature as a {\em size function} for the $0$th
homology \cite{FrLa99,KaMiMr04,VeUrFrFe93}).

Until recently, research on persistence has mainly focused on the
use of scalar functions for describing filtrations. The extent to
which this theory can be generalized to a situation in which two
or more functions characterize the data is currently under
investigation \cite{BiCe*08,CaZo09}. This generalization to
vector-valued functions is usually known as the
\emph{Multidimensional Persistence Theory}, where the adjective
multidimensional refers to the fact that filtrating functions are
vector-valued, and has no connections with the dimensionality of
the space under study. The use of vector-valued filtrating
functions, introduced in \cite{FrMu99}, would enable the analysis
of richer data structures.

One of the most important open questions in current research about
multidimensional persistent homology concerns the \emph{stability
problem}. In plain words, we need to determine how the computation
of invariants in this theory is affected by the unavoidable
presence of noise and approximation errors. Indeed, it is clear
that any data acquisition is subject to perturbation and, if
persistent homology were not stable, then distinct computational
investigations of the same object could produce completely
different results. Obviously, this would make it impossible to use
such a mathematical setting in real applications.

In this paper we succeed in solving this problem and giving a
positive answer about the stability of multidimensional persistent
homology (Theorem \ref{Multidimensional}). More precisely, we
prove that the rank invariants of nearby vector-valued filtrating
functions are ``close to each other'' in the sense expressed by a
suitable matching distance. We point out that, until now, the
stability of persistent homology has been studied only for
scalar-valued filtrating functions.

Our stability result for the multidimensional setting requires us
to use some recently developed ideas to investigate
\emph{Multidimensional Size Theory} \cite{BiCe*08}. The proof of
multidimensional stability is obtained by reduction to the
one-dimensional case, via an appropriate foliation in half-planes
of the domain of the multidimensional rank invariants, and the
definition of a family of suitable and possibly non-tame
continuous scalar-valued filtrating functions. Our result follows
by proving the stability property in this one-dimensional case. We
observe that the results obtained in \cite{CoEdHa07}, for tame
scalar functions, and in \cite{CaCo*09}, under some finiteness
assumptions, cannot be applied here. Indeed, we underline that our
stability result requires the functions only to be continuous. Our
generalization of one-dimensional stability from tame to
continuous functions is a positive answer to the question risen in
\cite{CoEdHa07}.

Our main point is that all the proofs carried out in \cite{FrLa01}
to analyze size functions via diagrams of points, the so called
persistence diagrams, as well as those in \cite{dAPhD02,dAFrLa} to
prove the stability of size functions, can be developed in a
completely parallel way for rank invariants associated with
continuous functions. As a consequence, we do not repeat the
technical details of all the proofs, whenever the arguments are
completely analogous to those used in the literature about size
functions. For the same reason, we refer the reader to
\cite{BiCe*08} (see also \cite{CaDiFe07}) for the proofs of the
Reduction Theorem \ref{Reduction} and Theorem \ref{Stability}
(Stability w.r.t. Function Perturbations) used to deduce our main
result about the stability of multidimensional persistent
homology.

Finally, we emphasize that throughout this paper we  work with
\v{C}ech homology over  a field. In the framework of persistence,
\v{C}ech homology has already been considered by Robbins in
\cite{Ro99,Ro00}. In our opinion, a strong motivation for using
\v{Cech homology} in studying persistent homology groups is that,
having the continuity axiom,  it ensures that the rank invariant
can be completely described by a persistence diagram, unlike the
singular and simplicial theories that guarantee that such a
description is complete only outside a set of vanishing measure,
as explained in Section \ref{Sect:Right-continuity}. We point out
that the \v{C}ech approach to homology theory is currently being
investigated for computational purposes \cite{Mr}.

\section{Basic Definitions and the Main Result}

In this paper, each considered space is assumed to be
triangulable, i.e. there is a finite simplicial complex with
homeomorphic underlying space. In particular, triangulable spaces
are always compact and metrizable.

The following relations $\preceq$ and $\prec$ are defined in
$\R^n$: for $\vec u=(u_1,\dots,u_n)$ and $\vec v=(v_1,\dots,v_n)$,
we say $\vec u\preceq\vec v$ (resp. $\vec u\prec\vec v$) if and
only if $u_i\leq\ v_i$ (resp. $u_i<v_i$) for every index
$i=1,\dots,n$. Moreover, $\R^n$ is endowed with the usual
$\max$-norm: $\|(u_1,u_2,\dots,u_n)\|_{\infty}=\max_{1\leq i\leq
n}|u_i|$.

We shall use the following notations: $\Delta^+$ will be the open
set $\{(\vec u,\vec v)\in\R^n\times\R^n:\vec u\prec\vec v\}$.
Given a triangulable space $X$, for every $n$-tuple $\vec
u=(u_1,\dots,u_n)\in\R^n$ and for every function
$\vec\p:X\to\R^n$, we shall denote by $X\langle\fr\preceq \vec
u\,\rangle$ the set $\{x\in X:\varphi_i(x)\leq u_i,\
i=1,\dots,n\}$.

The definition below extends the concept of the persistent
homology group to a multidimensional setting.

\begin{defn}
Let $\pi^{(\vec u,\vec v)}_k:\check{H}_k(X\langle\vec\p\preceq\vec
u\rangle)\rightarrow \check{H}_k(X\langle\vec\p\preceq\vec
v\rangle)$ be the homomorphism induced by the inclusion map
$\pi^{(\vec u,\vec v)}:X\langle\vec\p\preceq\vec
u\rangle\hookrightarrow X\langle\vec\p\preceq\vec v\rangle$ with
$\vec u\preceq\vec v$, where $\check{H}_k$ denotes the $k$th
\v{C}ech homology group. If $\vec u\prec\vec v$, the image of
$\pi^{(\vec u,\vec v)}_k$ is called the {\em multidimensional
$k$th persistent homology group of $(X,\vec\p)$ at $(\vec u, \vec
v)$}, and is denoted by $\check{H}_k^{(\vec u, \vec
v)}(X,\vec\p)$.
\end{defn}

In other words, the group $\check{H}_k^{(\vec u, \vec
v)}(X,\vec\p)$ contains all and only the homology classes of
cycles born before $\vec u$ and still alive at $\vec v$.

For details about \v{C}ech homology, the reader can refer to
\cite{EiSt}.

In what follows, we shall work with coefficients in a field
$\mathbb{K}$, so that homology groups are vector spaces, and hence
torsion-free. Therefore, they can be completely described by their
rank, leading to the following definition (cf. \cite{CaZo09}).

\begin{defn}[{\boldmath $k$}th rank invariant]\label{Rank}
Let $X$ be a triangulable space, and $\fr:X\to\R^n$ a continuous
function. Let $k\in\mathbb{Z}$. The {\em $k$th rank invariant of
the pair $(X,\fr)$ over a field $\mathbb{K}$} is the function
$\rho_{(X,\fr),k}:\Delta^+\to\N\cup\{\infty\}$ defined as
$$
\rho_{(X,\fr),k}(\vec u,\vec v)=\rank \,\pi^{(\vec u,\vec v)}_k.
$$
\end{defn}

By the rank of a homomorphism we mean the dimension of its image.
We shall prove in Lemma \ref{Finiteness} that, under our
assumptions on $X$ and $\vec\p$, the value $\infty$ is never
attained.

The main goal of this paper is to prove the following result.

\begin{thm}[Multidimensional Stability Theorem]\label{Multidimensional}
Let $X$ be a triangulable space. For every $k\in\mathbb{Z}$, there
exists a distance $D_{match}$ on the set
$\{\rho_{(X,\fr),k}\,|\,\vec\p:X\to\R^n\mbox{ continuous}\}$ such
that
\begin{eqnarray*}%\label{Main}
D_{match}\left(\rho_{(X,\fr),k},\rho_{(X,\vec\s),k}\right)\leq\max_{x\in
X}\|\fr(x)-\vec\s(x)\|_{\infty}.
\end{eqnarray*}
\end{thm}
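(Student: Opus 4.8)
The plan is to reduce the $n$-dimensional problem to the already-understood one-dimensional case by means of a foliation of $\Delta^+$ by half-planes, and then to define $D_{match}$ by taking a supremum, over the leaves of the foliation, of the classical matching (bottleneck) distance between the one-dimensional persistence diagrams obtained by restricting the rank invariant to each leaf. Concretely, for each unit vector $\vec l=(l_1,\dots,l_n)$ with $l_i>0$ and $\sum l_i = 1$, and each $\vec b=(b_1,\dots,b_n)$ with $\sum b_i=0$, consider the half-plane $\pi_{(\vec l,\vec b)}$ consisting of points $(\vec u,\vec v)=(s\vec l+\vec b,\, t\vec l+\vec b)$ with $s<t$. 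On such a leaf the restriction of $\rho_{(X,\fr),k}$ coincides, by the Reduction Theorem \ref{Reduction}, with the one-dimensional rank invariant of the pair $(X,F^{\vec l,\vec b}_{\fr})$ for an explicitly defined scalar function $F^{\vec l,\vec b}_{\fr}$ built from the components $\varphi_i$, the $l_i$ and the $b_i$ (essentially $\max_i (\varphi_i(x)-b_i)/l_i$, possibly rescaled). Since this one-dimensional rank invariant is, by the earlier remark on \v{C}ech homology and right-continuity together with Lemma \ref{Finiteness}, completely described by a persistence diagram, one may set
\[
D_{match}\!\left(\rho_{(X,\fr),k},\rho_{(X,\vec\s),k}\right)
 = \sup_{(\vec l,\vec b)} \; c(\vec l)\cdot d_{match}\!\left(D\!\left(F^{\vec l,\vec b}_{\fr}\right),\, D\!\left(F^{\vec l,\vec b}_{\vec\s}\right)\right),
\]
where $d_{match}$ is the usual one-dimensional bottleneck distance between persistence diagrams and $c(\vec l)=\min_i l_i$ is the normalizing factor coming from the change of parameter along the leaf.

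The key steps, in order, are: (i) verify that the family of half-planes $\{\pi_{(\vec l,\vec b)}\}$ genuinely foliates $\Delta^+$, so that the supremum above is taken over a parameter space that sees every point of the domain, and that the restriction of any rank invariant to a leaf determines it globally; (ii) invoke the Reduction Theorem \ref{Reduction} to identify the restricted $n$-dimensional rank invariant on $\pi_{(\vec l,\vec b)}$ with the one-dimensional rank invariant of the scalar function $F^{\vec l,\vec b}_{\fr}$, and note that these scalar functions are continuous (but in general not tame), which is exactly why one needs the \v{C}ech-homology framework of Section \ref{Sect:Right-continuity} for the persistence-diagram description to be valid; (iii) prove the one-dimensional stability inequality
\[
d_{match}\!\left(D\!\left(F^{\vec l,\vec b}_{\fr}\right),\, D\!\left(F^{\vec l,\vec b}_{\vec\s}\right)\right)
 \le \max_{x\in X}\bigl|F^{\vec l,\vec b}_{\fr}(x)-F^{\vec l,\vec b}_{\vec\s}(x)\bigr|
\]
for merely continuous scalar functions — this is the generalization from tame to continuous functions referred to in the introduction, obtained by running the arguments of \cite{FrLa01,dAPhD02,dAFrLa} verbatim in the \v{C}ech setting; (iv) bound $\max_x|F^{\vec l,\vec b}_{\fr}(x)-F^{\vec l,\vec b}_{\vec\s}(x)|$ by $\frac{1}{c(\vec l)}\max_x\|\fr(x)-\vec\s(x)\|_\infty$, using that $F^{\vec l,\vec b}$ is a max of affine-in-$\varphi_i$ expressions with slopes $1/l_i$, so that the factor $c(\vec l)$ in the definition of $D_{match}$ cancels the blow-up; and (v) check that $D_{match}$ so defined is actually a distance — symmetry and the triangle inequality are inherited leafwise from $d_{match}$, and non-degeneracy follows because two distinct rank invariants must differ on some point of $\Delta^+$, hence on some leaf, hence in the corresponding one-dimensional diagram (here finiteness from Lemma \ref{Finiteness} and the completeness of the persistence-diagram description are what make "differ on a leaf" detectable by $d_{match}$).

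Assembling (ii)–(iv) gives, for each leaf, $c(\vec l)\,d_{match}(\cdots)\le \max_x\|\fr(x)-\vec\s(x)\|_\infty$ with a bound independent of $(\vec l,\vec b)$; taking the supremum over leaves yields the theorem. The main obstacle is step (iii): extending one-dimensional stability of persistence diagrams from the tame case (treated in \cite{CoEdHa07,CaCo*09}) to arbitrary continuous functions. The difficulty is that non-tame functions can have persistence diagrams with infinitely many points accumulating at the diagonal, so one must argue that the matching distance is still well behaved and that the perturbation argument — matching cornerpoints of $D(F_{\fr})$ to cornerpoints of $D(F_{\vec\s})$ within distance $\max_x|F_{\fr}-F_{\vec\s}|$ — goes through in the limit; the \v{C}ech continuity axiom is precisely the tool that controls the behavior near the diagonal and guarantees right-continuity of the rank invariant, so that the classical combinatorial arguments of \cite{dAFrLa} transfer. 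A secondary technical point is checking measurability/compactness of the leaf-parameter space enough to know the supremum defining $D_{match}$ is attained or at least finite, which again reduces to the uniform bound established in step (iv).
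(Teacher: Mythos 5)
Your proposal is correct and follows essentially the same route as the paper: the same foliation by admissible half-planes, the same Reduction Theorem identifying the restricted rank invariant with that of $F_{(\vec l,\vec b)}^{\fr}=\max_i(\varphi_i-b_i)/l_i$, the same leafwise stability bound with Lipschitz factor $1/\min_i l_i$, and the identical definition $D_{match}=\sup_{(\vec l,\vec b)}\min_i l_i\cdot d_{match}(\cdot,\cdot)$. The one-dimensional stability for merely continuous functions via \v{C}ech homology that you flag as the main obstacle is exactly what the paper establishes in Theorem \ref{1-dim}.
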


The construction of $D_{match}$ will be given in the course of the
proof of the theorem.

\subsection{Idea of the proof}

Since the proof of Theorem  \ref{Multidimensional} is quite
technical, for the reader's convenience, we now summarize it in
its essential ideas. We inform the reader that most of the
intermediate results needed to prove Theorem
\ref{Multidimensional}, as well as the overall flow of the proof,
perfectly match well-established results developed in Size Theory.
For these reasons, whenever this happens, details will be skipped,
providing the reader with suitable references. However, in the
present paper, terminology will stick to that of Persistence
Theory as much as possible.

The key idea is that a foliation in half-planes of $\Delta^+$ can
be given, such that the restriction of the multidimensional rank
invariant to these half-planes turns out to be a one-dimensional
rank invariant in two scalar variables. This approach implies that
the comparison of two multidimensional rank invariants can be
performed leaf by leaf by measuring the distance of appropriate
one-dimensional rank invariants. Therefore the stability of
multidimensional persistence is a consequence of the
one-dimensional persistence stability.

As is well known, in the case of tame functions, the
one-dimensional persistence stability is obtained by considering
the bottleneck distance between persistence diagrams, i.e. finite
collections of points (with multiplicity) lying in $\R^2$ above
the diagonal \cite{CoEdHa07}. We show that the same approach works
also for continuous functions, not necessarily tame. We recall
that a filtrating function is said to be a tame function if it has
a finite number of homological critical values, and the homology
groups of all the lower level sets are finitely generated. The
main technical problem in working with continuous functions,
instead of tame functions, is that persistence diagrams may have
infinitely many points, accumulating onto the diagonal. This
difficulty is overcome following the same arguments used in
\cite{dAFrLa} for proving stability of size functions.

\section{Basic properties of the rank invariant and one-dimensional
stability for continuous functions}

The main result of this section is the stability of the
one-dimensional rank invariant for continuous functions (Theorem
\ref{1-dim}). It generalizes the main theorem in \cite{CoEdHa07}
that requires continuous tame functions. Its proof relies on a
number of basic simple properties of rank invariants that are
completely analogous to those used in \cite{dAFrLa,FrLa01} to
prove the stability of size functions.

\subsection{Properties of the multidimensional
rank invariant}

The next Lemma \ref{Finiteness} ensures that the multidimensional
$k$th rank invariant (Definition \ref{Rank}) is finite even
dropping the tameness condition requested in \cite{CoEdHa07}. We
underline that the rank invariant finiteness is not obvious from
the triangulability of the space. Indeed, the lower level sets
with respect to a continuous function are not necessarily
triangulable spaces.

\begin{lem}[Finiteness]\label{Finiteness}
Let $X$ be a triangulable space, and $\fr:X\to\R^n$ a continuous
function. Then, for every $(\vec u,\vec v)\in\Delta^+$, it holds
that $\rho_{(X,\fr),k}(\vec u,\vec v)<+\infty.$
\end{lem}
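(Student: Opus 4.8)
The plan is to reduce the statement to a situation where the lower level set, although not itself triangulable, sits between two genuinely triangulable subcomplexes of a fixed triangulation of $X$, and then to exploit the continuity axiom of \v{C}ech homology to pass to the limit. Fix a finite simplicial complex $K$ triangulating $X$ and fix $(\vec u,\vec v)\in\Delta^+$, so $\vec u\prec\vec v$. First I would observe that for each $\vec w\in\R^n$ with $\fr$ continuous the set $X\langle\fr\preceq\vec w\rangle$ is closed, hence compact, and that it is the nested intersection $\bigcap_{\vec w'\succ\vec w}X\langle\fr\preceq\vec w'\rangle$ of the open-type sets obtained by enlarging each coordinate threshold. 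By the continuity axiom of \v{C}ech homology on compact pairs, $\check H_k\bigl(X\langle\fr\preceq\vec w\rangle\bigr)=\varprojlim_{\vec w'\succ\vec w}\check H_k\bigl(X\langle\fr\preceq\vec w'\rangle\bigr)$; this is the place where the \v{C}ech theory is essential and where singular homology would fail.

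Next I would produce, for the two relevant thresholds $\vec u\prec\vec v$, a pair of genuinely triangulable spaces that sandwich the inclusion $\pi^{(\vec u,\vec v)}$. Since $\fr$ is uniformly continuous on the compact space $X$, choose a subdivision of $K$ fine enough that each closed simplex $\sigma$ has $\fr$-oscillation smaller than $\min_i(v_i-u_i)/3$. Let $A$ be the subcomplex spanned by all closed simplices contained in $X\langle\fr\preceq\vec u\rangle$ and let $B$ be the subcomplex spanned by all closed simplices meeting $X\langle\fr\preceq\vec u\rangle$; by the oscillation bound one gets the chain of inclusions $A\subseteq X\langle\fr\preceq\vec u\rangle\subseteq B\subseteq X\langle\fr\preceq\vec v\rangle$. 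Both $A$ and $B$ are finite simplicial complexes, so $\check H_k(A)$ and $\check H_k(B)$ are finite-dimensional $\mathbb K$-vector spaces, and the induced map $\check H_k(A)\to\check H_k(B)$ factors the composite $\check H_k(A)\to\check H_k(X\langle\fr\preceq\vec u\rangle)\to\check H_k(X\langle\fr\preceq\vec v\rangle)$. Hence the rank of $\pi^{(\vec u,\vec v)}_k$ restricted to the image of $\check H_k(A)$ is at most $\dim\check H_k(B)<+\infty$.

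What remains is to see that nothing is lost by restricting to that image, i.e. that the image of $\check H_k(A)$ surjects onto $\mathrm{im}\,\pi^{(\vec u,\vec v)}_k$ — equivalently, that every class in $\check H_k(X\langle\fr\preceq\vec u\rangle)$ that survives into $\check H_k(X\langle\fr\preceq\vec v\rangle)$ already comes from $A$. This I would get by refining the subdivision: applying the sandwich argument at an intermediate threshold $\vec u\prec\vec u'\prec\vec v$ and using the continuity axiom to write $\check H_k(X\langle\fr\preceq\vec u\rangle)$ as an inverse limit over subcomplexes squeezed between the level sets at $\vec u$ and at parameters decreasing to $\vec u$, one shows $\mathrm{im}\,\pi^{(\vec u,\vec v)}_k$ is a quotient of $\mathrm{im}\bigl(\check H_k(A)\to\check H_k(B)\bigr)$, and the latter is finite-dimensional. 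Therefore $\rho_{(X,\fr),k}(\vec u,\vec v)=\rank\pi^{(\vec u,\vec v)}_k\le\dim\check H_k(B)<+\infty$.

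The main obstacle I anticipate is the bookkeeping in the last step: the \v{C}ech groups of the true level sets are only inverse limits of the groups of the sandwiching subcomplexes, and inverse limits do not in general commute with images, so some care (e.g. a Mittag–Leffler-type stabilization argument, available here because all the finite complexes involved have finite-dimensional homology and the transition maps eventually stabilize in rank) is needed to conclude that the image in the limit is still bounded by a single $\dim\check H_k(B)$. Everything else is a routine translation of the corresponding finiteness arguments for size functions in \cite{FrLa99,FrLa01}.
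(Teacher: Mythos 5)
Your core argument is correct and is in fact simpler than the one in the paper, but you have manufactured an obstacle in the last step that does not exist. Once you have the chain $X\langle\fr\preceq\vec u\rangle\subseteq B\subseteq X\langle\fr\preceq\vec v\rangle$ with $B$ a finite subcomplex of the fine subdivision, you are done: by functoriality, $\pi^{(\vec u,\vec v)}_k$ is the composite $\check H_k(X\langle\fr\preceq\vec u\rangle)\to\check H_k(B)\to\check H_k(X\langle\fr\preceq\vec v\rangle)$, and the rank of a composite is bounded by the rank of its \emph{second} factor, hence by $\dim\check H_k(B)<+\infty$. You instead tried to factor through the lower subcomplex $A$, which forces you to ask whether every surviving class of $\check H_k(X\langle\fr\preceq\vec u\rangle)$ is hit from $\check H_k(A)$ --- a genuine question, but an irrelevant one; consequently the continuity axiom, the inverse limits, and the Mittag--Leffler discussion are all red herrings here (the continuity axiom is needed in this paper only later, for right-continuity of the rank invariant, not for finiteness). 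The paper's route is strategically the same --- sandwich a space with finitely generated \v{C}ech homology between the two lower level sets and bound the rank of the composite by the rank of the second map --- but its intermediate space is different: it builds a piecewise linear approximation $\vec\psi$ of $\fr$ with $\|\fr-\vec\psi\|_\infty<\varepsilon$ and uses the sublevel set $X\langle\vec\psi\preceq\vec u+\vec\varepsilon\rangle$, whose triangulability requires the simplicial-map machinery of Rourke--Sanderson. Your simplicial neighborhood $B$ avoids that machinery entirely and only needs uniform continuity plus the oscillation bound, so the cleaned-up version of your proof is arguably preferable; just delete the subcomplex $A$ and the final paragraph.
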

\begin{proof}

Since $X$ is triangulable, we can assume that it is the support of
a simplicial complex $K$ and that a distance $d$ is defined on
$X$.

Let us fix $(\vec u,\vec v)\in\Delta^+$, and choose a real number
$\varepsilon>0$ such that, setting
$\vec\varepsilon=(\varepsilon,\dots,\varepsilon)$, $\vec
u+2\vec\varepsilon\prec\vec v$.

We now show that there exist a function $\vec\s:X\to\R^n$, a
subdivision $K''$ of $K$, and a triangulation $L''$ of
$\vec\s(X)$, such that $(i)$ the triple $(\vec\s,K'',L'')$ is
simplicial, and $(ii)$ $\max_{x\in
X}\|\fr(x)-\vec\psi(x)\|_{\infty}<\varepsilon$.

Indeed, by the uniform continuity of each component $\varphi_i$ of
$\fr$, there exists a real number $\delta>0$ such that, for
$i=1,\dots,n$, $|\varphi_i(x)-\varphi_i(x')|<\varepsilon$, for
every $x,x'\in X$ with $d(x,x')<\delta$. We take a subdivision
$K'$ of $K$ such that $\mathrm{mesh}(K')<\delta$, and define
$\vec\psi(x)=\fr(x)$ for every vertex $x$ of $K'$. Next, we
consider the linear extension of $\vec\psi$ to the other simplices
of $K'$. In this way, $\vec\psi$ is linear on each simplex of
$K'$.

Since $\vec\s$ is piecewise linear, $\vec\s(X)$ is the underlying
space of a simplicial complex $L'$. By taking suitable
subdivisions $K''$ of $K'$ and $L''$ of $L'$, $\vec\s$ also sends
simplices into simplices and therefore $(\vec\s,K'',L'')$ is
simplicial (cf. \cite[Thm. 2.14]{RoSa72}). This proves $(i)$.

To see $(ii)$, let us consider a point $x$ belonging to a simplex
in $K'$, of vertices $v_1,\dots,v_r$. Since
$x=\sum_{i=1}^r\lambda_i\cdot v_i$, with
$\lambda_1,\dots,\lambda_r\geq 0$ and $\sum_{i=1}^r\lambda_i=1$,
and $\vec\s$ is linear on each simplex, it follows that
$\big\|\vec\p(x)-\vec\s(x)\big\|_{\infty}=\big\|\vec\p(x)-\sum_{i=1}^r\lambda_i\cdot\vec\s(v_i)\big\|_{\infty}=
\big\|\vec\p(x)-\sum_{i=1}^r\lambda_i\cdot\vec\p(v_i)\big\|_{\infty}=
\big\|\sum_{i=1}^r\lambda_i\cdot\vec\p(x)-\sum_{i=1}^r\lambda_i\cdot\vec\p(v_i)\big\|_{\infty}\leq
\sum_{i=1}^r\lambda_i\big\|\vec\p(x)-\vec\p(v_i)\big\|_{\infty}<\varepsilon$.

We now prove that, since $(\vec\s,K'',L'')$ is simplicial, it
holds that $\check{H}_k(X\langle\vec\psi\preceq\vec
u+\vec\varepsilon\rangle)$ is finitely generated. Indeed, since
the intersection between a simplex and a half-space is
triangulable, there exists a subdivision $L'''$ of $L''$ such that
$\vec\s(X)\cap\{\vec x\in\R^n:\vec x\preceq\vec
u+\vec\varepsilon\}$ is triangulated by a subcomplex of $L'''$. %Remark: \cite[pag 13,Es.5]{RoSa72}
By \cite[Lemma 2.16]{RoSa72}, there is a subdivision $K'''$ of
$K''$ such that $(\vec\s,K''',L''')$ is simplicial. It follows
that $X\langle\vec\s\preceq\vec u+\vec\varepsilon\rangle$ is
triangulable, and hence $\check{H}_k(X\langle\vec\s\preceq\vec
u+\vec\varepsilon\rangle)$ is finitely generated.

Since $\vec u+2\vec\varepsilon\prec\vec v$ and $\max_{x\in
X}\|\fr(x)-\vec\psi(x)\|_{\infty}<\varepsilon$, we have the
inclusions $X\langle\fr\preceq\vec u\rangle\stackrel{i}{\to}
X\langle\vec\psi\preceq \vec
u+\vec\varepsilon\rangle\stackrel{j}{\to} X\langle\fr\preceq \vec
v\rangle$, inducing the homomorphisms
$\check{H}_k(X\langle\fr\preceq\vec u\rangle)\stackrel{i_k}{\to}
\check{H}_k(X\langle\vec\psi\preceq \vec
u+\vec\varepsilon\rangle)\stackrel{j_k}{\to}
\check{H}_k(X\langle\fr\preceq \vec v\rangle)$. By recalling that
$\check{H}_k(X\langle\vec\psi\preceq \vec
u+\vec\varepsilon\rangle)$ is finitely generated, and since
$\textrm{rank}(j_k\circ i_k)\leq\textrm{rank}(j_k)$, we obtain the
claim.
\end{proof}

The following Lemma \ref{Monotonicity} and Lemma \ref{Diagonal}
generalize to the multidimensional setting analogous results valid
for $n=1$. We omit the trivial proof of Lemma \ref{Monotonicity}.

\begin{lem}[Monotonicity]\label{Monotonicity}
$\rho_{(X,\fr),k}(\vec u,\vec v)$ is non-decreasing in the
variable $\vec u$ and non-increasing in the variable $\vec v$.
\end{lem}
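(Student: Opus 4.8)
The plan is to rely only on the functoriality of \v{C}ech homology with respect to inclusions, together with the observation that the lower level sets $X\langle\fr\preceq\vec w\rangle$ are nested in $\vec w$ with respect to the order $\preceq$. Fix $(\vec u,\vec v)\in\Delta^+$; by Definition \ref{Rank}, $\rho_{(X,\fr),k}(\vec u,\vec v)$ is the dimension of the image of $\pi^{(\vec u,\vec v)}_k\colon\check{H}_k(X\langle\fr\preceq\vec u\rangle)\to\check{H}_k(X\langle\fr\preceq\vec v\rangle)$.

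To prove monotonicity in $\vec u$, I would take $\vec u\preceq\vec u'$ with $(\vec u',\vec v)\in\Delta^+$. Then $X\langle\fr\preceq\vec u\rangle\subseteq X\langle\fr\preceq\vec u'\rangle\subseteq X\langle\fr\preceq\vec v\rangle$, and the inclusion $X\langle\fr\preceq\vec u\rangle\hookrightarrow X\langle\fr\preceq\vec v\rangle$ factors through $X\langle\fr\preceq\vec u'\rangle$. Functoriality of $\check{H}_k$ then gives $\pi^{(\vec u,\vec v)}_k=\pi^{(\vec u',\vec v)}_k\circ\pi^{(\vec u,\vec u')}_k$, so $\mathrm{im}\,\pi^{(\vec u,\vec v)}_k\subseteq\mathrm{im}\,\pi^{(\vec u',\vec v)}_k$ and hence $\rho_{(X,\fr),k}(\vec u,\vec v)\le\rho_{(X,\fr),k}(\vec u',\vec v)$. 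Symmetrically, for monotonicity in $\vec v$ I would take $\vec v'\preceq\vec v$ with $(\vec u,\vec v')\in\Delta^+$; now the inclusion factors as $X\langle\fr\preceq\vec u\rangle\hookrightarrow X\langle\fr\preceq\vec v'\rangle\hookrightarrow X\langle\fr\preceq\vec v\rangle$, giving $\pi^{(\vec u,\vec v)}_k=\pi^{(\vec v',\vec v)}_k\circ\pi^{(\vec u,\vec v')}_k$. Since the linear map $\pi^{(\vec v',\vec v)}_k$ cannot raise the dimension of the subspace $\mathrm{im}\,\pi^{(\vec u,\vec v')}_k$, we obtain $\rho_{(X,\fr),k}(\vec u,\vec v)\le\rho_{(X,\fr),k}(\vec u,\vec v')$. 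Since an arbitrary move of $\vec u$ downward or of $\vec v$ upward inside $\Delta^+$ is exactly a $\preceq$-comparison of the type handled above, this proves the statement.

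There is essentially no obstacle here: the only thing to be careful about is that the three inclusions above really induce the claimed composition of maps on \v{C}ech homology, i.e. that $\check{H}_k$ is functorial on triangulable spaces and inclusions; this is standard (see \cite{EiSt}), and combined with Lemma \ref{Finiteness} it also ensures that all the ranks being compared are finite natural numbers, so that the inequalities make sense without the value $\infty$ intervening. This is why the authors can rightly regard the proof as trivial.
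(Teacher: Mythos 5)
Your proof is correct and is precisely the standard functoriality argument the authors have in mind when they write that the proof of Lemma \ref{Monotonicity} is trivial and omit it: factoring the inclusion of lower level sets through an intermediate one and noting that composition with a linear map cannot increase the dimension of the image. (The appeal to Lemma \ref{Finiteness} is not actually needed, since the inequalities hold in $\N\cup\{\infty\}$ anyway, but this does no harm.)
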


\begin{lem}[Diagonal Jump]\label{Diagonal}
Let $X,Y$ be two triangulable spaces, and $f:X\to Y$ a
homeomorphism. Let $\fr:X\to\R^n$, $\vec\psi:Y\to\R^n$ be
continuous functions such that $\max_{x\in
X}\|\fr(x)-\vec\psi\circ f(x)\|_{\infty}\leq h$. Then, setting
$\vec h=(h,\dots,h)$, for every $(\vec u,\vec v)\in\Delta^+$, we
have $\rho_{(X,\fr),k}(\vec u-\vec h,\vec v+\vec
h)\leq\rho_{(Y,\vec\psi),k}(\vec u ,\vec v)$.
\end{lem}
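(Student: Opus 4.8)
The plan is to realize $\rho_{(X,\fr),k}(\vec u - \vec h, \vec v + \vec h)$ as the rank of a homomorphism that factors through the homomorphism whose rank is $\rho_{(Y,\vec\psi),k}(\vec u, \vec v)$, using $f$ to identify the relevant sublevel sets. First I would observe that the hypothesis $\max_{x\in X}\|\fr(x)-\vec\psi\circ f(x)\|_\infty \leq h$ translates, for each $i=1,\dots,n$, into the pointwise inequality $\psi_i(f(x)) - h \leq \varphi_i(x) \leq \psi_i(f(x)) + h$. From this, for any $\vec w \in \R^n$ one gets the inclusions
\[
X\langle \fr \preceq \vec w - \vec h\rangle \ \subseteq\ f^{-1}\big(Y\langle\vec\psi \preceq \vec w\rangle\big) \ \subseteq\ X\langle\fr \preceq \vec w + \vec h\rangle .
\]
Indeed, if $\varphi_i(x) \leq w_i - h$ for all $i$, then $\psi_i(f(x)) \leq \varphi_i(x) + h \leq w_i$, so $f(x) \in Y\langle\vec\psi\preceq\vec w\rangle$; conversely if $f(x) \in Y\langle\vec\psi\preceq\vec w\rangle$ then $\varphi_i(x) \leq \psi_i(f(x)) + h \leq w_i + h$. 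Since $f$ is a homeomorphism, $f^{-1}\big(Y\langle\vec\psi\preceq\vec w\rangle\big)$ is homeomorphic to $Y\langle\vec\psi\preceq\vec w\rangle$ via (the restriction of) $f$.

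Next I would set up the commutative diagram of inclusions. Applying the chain of inclusions above with $\vec w = \vec u$ and then noting that the outer set $X\langle\fr\preceq\vec u + \vec h\rangle$ includes into $X\langle\fr\preceq\vec v - \vec h\rangle$ (since $\vec u + \vec h \preceq \vec v - \vec h$, which holds because $\vec u \prec \vec v$ and we may assume $h$ small — or rather, this is where one uses that $\vec u - \vec h \prec \vec v + \vec h$ is the relevant condition; one applies the chain again with $\vec w = \vec v$), one builds the sequence
\[
X\langle\fr\preceq\vec u - \vec h\rangle \hookrightarrow f^{-1}\big(Y\langle\vec\psi\preceq\vec u\rangle\big) \hookrightarrow X\langle\fr\preceq\vec u + \vec h\rangle \hookrightarrow f^{-1}\big(Y\langle\vec\psi\preceq\vec v\rangle\big) \hookrightarrow X\langle\fr\preceq\vec v + \vec h\rangle.
\]
Applying the $k$th \v{C}ech homology functor (which respects the functoriality of inclusions and the homeomorphism $f$), the composite homomorphism from $\check H_k(X\langle\fr\preceq\vec u-\vec h\rangle)$ to $\check H_k(X\langle\fr\preceq\vec v+\vec h\rangle)$ equals $\pi^{(\vec u-\vec h,\vec v+\vec h)}_k$ for $(X,\fr)$, and it factors through the homomorphism $\check H_k\big(f^{-1}(Y\langle\vec\psi\preceq\vec u\rangle)\big) \to \check H_k\big(f^{-1}(Y\langle\vec\psi\preceq\vec v\rangle)\big)$, which via $f$ is identified with $\pi^{(\vec u,\vec v)}_k$ for $(Y,\vec\psi)$. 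Since the rank of a composite homomorphism $g \circ h$ is at most $\min(\rank g, \rank h)$ — in particular at most the rank of whichever map it factors through in the middle — we conclude $\rho_{(X,\fr),k}(\vec u-\vec h,\vec v+\vec h) = \rank\big(\pi^{(\vec u-\vec h,\vec v+\vec h)}_k\big) \leq \rank\big(\pi^{(\vec u,\vec v)}_k \text{ for }(Y,\vec\psi)\big) = \rho_{(Y,\vec\psi),k}(\vec u,\vec v)$.

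I do not expect a serious obstacle here: the only mildly delicate points are bookkeeping with the index inequalities (making sure $\vec u - \vec h \prec \vec v + \vec h$ so that the left-hand side is defined, which is immediate from $\vec u \prec \vec v$) and being careful that all the inclusion-induced squares genuinely commute so that the overall composite is the asserted persistence homomorphism. The factorization-of-rank argument at the end is exactly the same one already used at the close of the proof of Lemma \ref{Finiteness}, so it can be invoked almost verbatim. Note that finiteness of all the ranks involved is guaranteed by Lemma \ref{Finiteness}, so the inequality is between genuine natural numbers.
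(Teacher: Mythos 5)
Your overall strategy is exactly the paper's: factor $\pi_k^{(\vec u-\vec h,\vec v+\vec h)}$ for $(X,\fr)$ through $\pi_k^{(\vec u,\vec v)}$ for $(Y,\vec\psi)$ via a commutative diagram of inclusion-induced and $f$-induced maps, then use the fact that the rank of a composite is bounded by the rank of the map it factors through. However, the specific chain of inclusions you wrote contains a step that is false in general: for $x\in X\langle\fr\preceq\vec u+\vec h\rangle$ you only get $\psi_i(f(x))\leq u_i+2h$, so the inclusion $X\langle\fr\preceq\vec u+\vec h\rangle\hookrightarrow f^{-1}\bigl(Y\langle\vec\psi\preceq\vec v\rangle\bigr)$ requires $\vec u+2\vec h\preceq\vec v$, a condition not implied by the hypotheses (take $n=1$, $u=0$, $v=1/10$, $h=1$). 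The same problem affects your parenthetical claim that $\vec u+\vec h\preceq\vec v-\vec h$ ``holds because $\vec u\prec\vec v$ and we may assume $h$ small'': $h$ is given, not chosen, and the lemma must hold for every valid bound $h$. You seem to half-notice the difficulty yourself but do not resolve it.

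The repair is immediate, and it is what the paper does: omit the intermediate set $X\langle\fr\preceq\vec u+\vec h\rangle$ entirely and use the shorter chain $X\langle\fr\preceq\vec u-\vec h\rangle\hookrightarrow f^{-1}\bigl(Y\langle\vec\psi\preceq\vec u\rangle\bigr)\hookrightarrow f^{-1}\bigl(Y\langle\vec\psi\preceq\vec v\rangle\bigr)\hookrightarrow X\langle\fr\preceq\vec v+\vec h\rangle$, where the middle inclusion holds simply because $\vec u\preceq\vec v$, and the two outer inclusions are the ones you correctly verified. With that correction the rest of your argument (commutativity of the induced diagram in \v{C}ech homology, identification of the middle map with $\pi_k^{(\vec u,\vec v)}$ for $(Y,\vec\psi)$ via the homeomorphism $f$, and the rank bound) goes through and coincides with the paper's proof.
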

\begin{proof}
Since $\|\fr-\vec\psi\circ f\|_{\infty}\leq h$, we have the
following commutative diagram

\begin{centering}
\hfill \xymatrix {\check{H}_k(X\langle\fr\preceq\vec u-\vec
h\rangle)\ar[r]^-{i_k} \ar[d]&\check{H}_k(X\langle\fr\preceq \vec
v+\vec h\rangle)
\\\check{H}_k(Y\langle\vec\psi\preceq\vec u\rangle)\ar[r]^-{j_k}&\check{H}_k(Y\langle\vec\psi\preceq \vec
v\rangle)\ar[u],}\hfill
\end{centering}

\noindent where $i_k$ and $j_k$ are induced by inclusions, and the
vertical homomorphisms are induced by restrictions of $f$ and
$f^{-1}$, respectively. Thus the claim follows by observing that
$\textrm{rank}\ i_k\leq\textrm{rank}\ j_k$.
\end{proof}

\subsection{Properties of the one-dimensional rank invariant}

Now we confine ourselves to the case $n=1$. Therefore, for the
sake of simplicity, the symbols $\fr, \vec u,\vec v$ will be
replaced by $\p, u, v$, respectively. We remark that $\Delta^+$
reduces to be the set $\{(u,v)\in\R^2:u<v\}$. Moreover, we use the
following notations: $\Delta=\partial\Delta^+$,
$\Delta^*=\Delta^+\cup\{(u,\infty):u\in\R\}$, and
$\bar\Delta^*=\Delta^*\cup\Delta$. Finally, we write
$\|\p\|_\infty$ for $\max_{x\in X}|\p(x)|$.

\subsubsection{Right-continuity of the one-dimensional rank
invariant} \label{Sect:Right-continuity}

In what follows we shall prove that, using \v{C}ech homology, the
one-dimensional rank invariant is right-continuous with respect to
both its variables $u$ and $v$, i.e. $\lim_{u\to\bar
u^+}\rho_{(X,\p),k}(u,v)=\rho_{(X,\p),k}(\bar u,v)$ and
$\lim_{v\to\bar v^+}\rho_{(X,\p),k}(u,v)=\rho_{(X,\p),k}(u,\bar
v)$. This property will be necessary to completely characterize
the rank invariant by a persistence diagram, a descriptor whose
definition will be recalled later in this section. In the absence
of right-continuity, persistence diagrams describe rank invariants
only almost everywhere, thus justifying the use of \v{C}ech
homology in this context.

The next example shows that the right-continuity in the variable
$u$ does not always hold when persistent homology groups are
defined using simplicial or singular homology, even under the
tameness assumption.

\begin{ex}\label{WarsawCircle}
Let $X$ be a closed rectangle of $\R^2$ containing a Warsaw circle
(see Figure \ref{warsaw}). Let also $\p:X\to\R$ be the Euclidean
distance from the Warsaw circle.
\begin{figure}[h]
\begin{center}
\psfrag{X}{$X$}\psfrag{X'}{$\tilde{X}$}
\includegraphics[width=5cm]{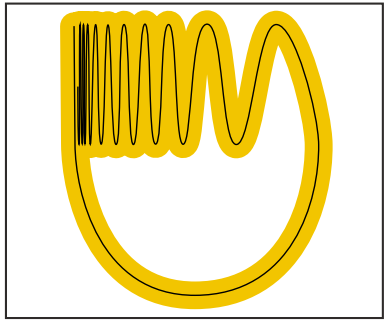}
\caption{A lower level set $X\langle\p\leq u\rangle$, for a
sufficiently small $u>0$, as considered in Example
\ref{WarsawCircle}, corresponds to a dilation (shaded) of our
Warsaw circle.}\label{warsaw}
\end{center}
\end{figure}

It is easy to see that $\p$ is tame on $X$ (with respect to both
singular and \v{C}ech homology). Moreover, the rank of the
singular persistent homology group $H_1^{(u,v)}(X,\p)$ is equal to
$1$ for $v>u>0$ and $v$ sufficiently small, whereas it is equal to
$0$ when $u=0$, showing that singular persistent homology is not
right-continuous in the variable $u$.
\end{ex}

Analogously, it is possible to show that simplicial or singular
homology do not ensure the right-continuity in the variable $v$
(see Appendix \ref{spherePatologic}).\\

Let us fix two real numbers $\bar u<\bar v$ and, for $\bar
u<u'\leq u''<\bar v$, consider the following commutative diagram

\begin{eqnarray}\label{ComDiag1}
\begin{array}{c}
\begin{centering}
\hfill \xymatrix {\check{H}_k(X\langle\p\leq
u'\rangle)\ar[rr]^-{\pi_k^{(u',u'')}} \ar[d]_{\pi_k^{(u',\bar
v)}}&&\check{H}_k(X\langle\p\leq
u''\rangle)\ar[d]_{\pi_k^{(u'',\bar v)}}
\\ \check{H}_k(X\langle\p\leq
\bar v\rangle)\ar[rr]^-{id}&&\check{H}_k(X\langle\p\leq \bar
v\rangle).}\hfill
\end{centering}
\end{array}
\end{eqnarray}

\noindent By recalling that
$\check{H}_k^{(u,v)}(X,\p)=\textrm{im}\ \pi_k^{(u,v)}$, from the
above diagram (\ref{ComDiag1}), it is easy to see that each
$\pi_k^{(u',u'')}$ induces the inclusion map
$\sigma^{(u',u'')}_k:\check{H}_k^{(u',\bar
v)}(X,\p)\to\check{H}_k^{(u'',\bar v)}(X,\p)$. The following Lemma
\ref{RightConstancyU} states that, for every $u''\geq u'>\bar u$,
with $u''$ sufficiently close to $\bar u$, the maps
$\sigma_k^{(u',u'')}$ are all isomorphisms.

\begin{lem}\label{RightConstancyU}
Let $(\bar u,\bar v)\in\Delta^+$, and let
$\sigma^{(u',u'')}_k:\check{H}_k^{(u',\bar
v)}(X,\p)\to\check{H}_k^{(u'',\bar v)}(X,\p)$ be the inclusion of
vector spaces induced by the map $\pi^{(u',u'')}_k$. Then there
exists $\hat u$, with $\bar u<\hat u<\bar v$, such that the maps
$\sigma^{(u',u'')}_k$ are isomorphisms for every $u',u''$ with
$\bar u<u'\leq u''\leq\hat u$.
\end{lem}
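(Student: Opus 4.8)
The plan is to deduce the lemma from the two facts already at our disposal: the finiteness of the rank invariant (Lemma \ref{Finiteness}) and its monotonicity in the first variable (Lemma \ref{Monotonicity}). The starting observation is the one made just before the statement: by the commutativity of diagram \eqref{ComDiag1}, the map $\sigma_k^{(u',u'')}$ is nothing but the inclusion of $\check{H}_k^{(u',\bar v)}(X,\p)$ into $\check{H}_k^{(u'',\bar v)}(X,\p)$, where both groups are regarded as vector subspaces of the \emph{fixed} space $\check{H}_k(X\langle\p\leq\bar v\rangle)$. In particular $\sigma_k^{(u',u'')}$ is always injective, so it is an isomorphism if and only if $\dim\check{H}_k^{(u',\bar v)}(X,\p)=\dim\check{H}_k^{(u'',\bar v)}(X,\p)$, i.e. if and only if $\rho_{(X,\p),k}(u',\bar v)=\rho_{(X,\p),k}(u'',\bar v)$.

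Thus the whole problem reduces to showing that the function $g(u):=\rho_{(X,\p),k}(u,\bar v)$ is constant on some interval $(\bar u,\hat u]$. Here is how I would argue. By Lemma \ref{Finiteness}, $g(u)$ is a finite non-negative integer for every $u\in(\bar u,\bar v)$; by Lemma \ref{Monotonicity}, $g$ is non-decreasing on $(\bar u,\bar v)$. Being a nonempty subset of $\N$ bounded below, the set $\{g(u):\bar u<u<\bar v\}$ has a least element $m$, attained at some $\hat u$ with $\bar u<\hat u<\bar v$. By monotonicity, for every $u'$ with $\bar u<u'\leq\hat u$ one has $m\leq g(u')\leq g(\hat u)=m$, hence $g\equiv m$ on $(\bar u,\hat u]$. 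Consequently, for all $u',u''$ with $\bar u<u'\leq u''\leq\hat u$, the inclusion $\sigma_k^{(u',u'')}$ is an injective linear map between two vector spaces of the same finite dimension $m$, and is therefore an isomorphism.

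I do not expect a genuine obstacle in this argument. The only point that deserves a moment's care is making rigorous the claim that $\sigma_k^{(u',u'')}$ is an honest inclusion of subspaces of $\check{H}_k(X\langle\p\leq\bar v\rangle)$ — so that injectivity comes for free and equality of the (finite) dimensions is all one needs — but this is exactly what diagram \eqref{ComDiag1} was introduced to provide, via $\pi_k^{(u'',\bar v)}\circ\pi_k^{(u',u'')}=\pi_k^{(u',\bar v)}$. Everything else is the elementary observation that a non-decreasing, $\N$-valued function on an interval is eventually constant as its argument decreases to the left endpoint.
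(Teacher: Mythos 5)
Your proof is correct and follows essentially the same route as the paper: injectivity of $\sigma_k^{(u',u'')}$ from diagram \eqref{ComDiag1}, plus the Finiteness and Monotonicity Lemmas to get local constancy of $u\mapsto\rho_{(X,\p),k}(u,\bar v)$ just to the right of $\bar u$. You merely spell out in more detail the elementary fact (a non-decreasing $\N$-valued function attains its minimum and is constant to the left of any minimizer) that the paper leaves implicit.
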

\begin{proof}
We observe that the maps $\sigma^{(u',u'')}_k$ are injective by
construction (see diagram (\ref{ComDiag1})). Moreover, by the
Finiteness Lemma~\ref{Finiteness} and the Monotonicity
Lemma~\ref{Monotonicity}, there exists $\hat u$, with $\bar u<\hat
u<\bar v$, such that $\rho_{(X,\p),k}(u',\bar v):=\textrm{rank
}\check{H}_k^{(u',\bar v)}(X,\p)$ is finite and equal to
$\rho_{(X,\p),k}(u'',\bar v):=\textrm{rank }\check{H}_k^{(u'',\bar
v)}(X,\p)$ whenever $\bar u<u'\leq u''\leq\hat u$. This implies
that $\sigma^{(u',u'')}_k$ are isomorphisms.
\end{proof}
Analogously, by considering the commutative diagram

\begin{centering}
\hfill \xymatrix {\check{H}_k(X\langle\p\leq \bar
u\rangle)\ar[rr]^-{id} \ar[d]_{\pi_k^{(\bar u,
v')}}&&\check{H}_k(X\langle\p\leq \bar
u\rangle)\ar[d]_{\pi_k^{(\bar u,v'')}}
\\ \check{H}_k(X\langle\p\leq
v'\rangle)\ar[rr]^-{\pi_k^{(v',v'')}}&&\check{H}_k(X\langle\p\leq
v''\rangle),}\hfill
\end{centering}

\noindent we obtain induced maps
$\tau^{(v',v'')}_k:\check{H}_k^{(\bar
u,v')}(X,\p)\to\check{H}_k^{(\bar u,v'')}(X,\p)$, and prove that
they are isomorphisms whenever $v',v''$ are sufficiently close to
$\bar v$, with $\bar v<v'\leq v''$.

\begin{lem}\label{RightConstancyV}
Let $(\bar u,\bar v)\in\Delta^+$, and let
$\tau^{(v',v'')}_k:\check{H}_k^{(\bar
u,v')}(X,\p)\to\check{H}_k^{(\bar u,v'')}(X,\p)$ be the
homomorphism of vector spaces induced by the map
$\pi^{(v',v'')}_k$. Then there exists $\hat v>\bar v$ such that
the homomorphisms $\tau^{(v',v'')}_k$ are isomorphisms for every
$v',v''$ with $\bar v<v'\leq v''\leq\hat v$.
\end{lem}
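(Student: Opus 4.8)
The plan is to mirror the proof of Lemma \ref{RightConstancyU}, only with the roles of injectivity and surjectivity exchanged. First I would record, from the commutative diagram preceding the statement, that $\tau^{(v',v'')}_k$ is nothing but the restriction of $\pi^{(v',v'')}_k$ to the subspace $\check{H}_k^{(\bar u,v')}(X,\p)=\mathrm{im}\,\pi^{(\bar u,v')}_k$, with values in $\check{H}_k^{(\bar u,v'')}(X,\p)=\mathrm{im}\,\pi^{(\bar u,v'')}_k$. Since that diagram gives $\pi^{(\bar u,v'')}_k=\pi^{(v',v'')}_k\circ\pi^{(\bar u,v')}_k$, we obtain $\mathrm{im}\,\pi^{(\bar u,v'')}_k=\pi^{(v',v'')}_k\bigl(\mathrm{im}\,\pi^{(\bar u,v')}_k\bigr)$, that is, $\tau^{(v',v'')}_k$ is \emph{surjective} for every $\bar v<v'\le v''$. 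So it only remains to control dimensions.

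Next I would show that the integer $\rho_{(X,\p),k}(\bar u,v')=\dim\check{H}_k^{(\bar u,v')}(X,\p)$ stays bounded as $v'\downarrow\bar v$. This is the step where the argument genuinely departs from that of Lemma \ref{RightConstancyU}: there the monotonicity in the first variable already forced a uniform bound near $\bar u$, whereas here $\rho_{(X,\p),k}(\bar u,v')$ is only known to be non-increasing in $v'$ (Monotonicity Lemma \ref{Monotonicity}) and could a priori blow up as $v'$ approaches $\bar v$ from above. To rule this out I would factor the inclusion $X\langle\p\le\bar u\rangle\hookrightarrow X\langle\p\le v'\rangle$ through $X\langle\p\le\bar v\rangle$, which (using $\bar u<\bar v<v'$) yields $\pi^{(\bar u,v')}_k=\pi^{(\bar v,v')}_k\circ\pi^{(\bar u,\bar v)}_k$ and hence $\rho_{(X,\p),k}(\bar u,v')=\mathrm{rank}\,\pi^{(\bar u,v')}_k\le\mathrm{rank}\,\pi^{(\bar u,\bar v)}_k=\rho_{(X,\p),k}(\bar u,\bar v)$, which is finite by the Finiteness Lemma \ref{Finiteness} since $(\bar u,\bar v)\in\Delta^+$.

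Finally I would combine the two steps. The function $v'\mapsto\rho_{(X,\p),k}(\bar u,v')$ is $\mathbb{N}$-valued, non-increasing on $(\bar v,+\infty)$, and bounded above by $\rho_{(X,\p),k}(\bar u,\bar v)$; hence it can take only finitely many values on any interval $(\bar v,v_0]$ and, being monotone, it is eventually constant, so there is $\hat v>\bar v$ with $\rho_{(X,\p),k}(\bar u,v')=m$ for all $v'\in(\bar v,\hat v]$, where $m=\dim\check{H}_k^{(\bar u,v')}(X,\p)<+\infty$. For $\bar v<v'\le v''\le\hat v$ the map $\tau^{(v',v'')}_k$ is then a surjection between $\mathbb{K}$-vector spaces of the same finite dimension $m$, hence an isomorphism, which is exactly the claim. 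I expect the only subtle point to be the uniform finiteness bound of the second step (the factorization through $X\langle\p\le\bar v\rangle$); the rest is diagram chasing and elementary linear algebra.
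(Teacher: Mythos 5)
Your proof is correct and follows essentially the same route as the paper, which simply observes that the $\tau^{(v',v'')}_k$ are surjections (from the commutative diagram) between vector spaces of the same finite dimension, the latter obtained near $\bar v^+$ from the Finiteness and Monotonicity Lemmas. The only remark worth making is that the "subtle" uniform bound you establish by factoring through $X\langle\p\leq\bar v\rangle$ is nothing more than the Monotonicity Lemma \ref{Monotonicity} applied at $(\bar u,\bar v)$ versus $(\bar u,v')$ together with Lemma \ref{Finiteness}, i.e.\ $\rho_{(X,\p),k}(\bar u,v')\leq\rho_{(X,\p),k}(\bar u,\bar v)<+\infty$, so it is already covered by the lemmas the paper cites.
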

\begin{proof}
The proof is essentially the same as that of Lemma
\ref{RightConstancyU}, after observing that the maps
$\tau^{(v',v'')}_k$ are surjections between vector spaces of the
same finite dimension.
\end{proof}

\begin{lem}[Right-Continuity]\label{Right}
$\rho_{(X,\p),k}(u,v)$ is right-continuous with respect to both
the variables $u$ and $v$.
\end{lem}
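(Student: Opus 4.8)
The plan is to establish right-continuity separately in each of the two variables, in both cases \emph{reducing} the statement to the finite stabilization already obtained in Lemmas~\ref{RightConstancyU} and~\ref{RightConstancyV} and then \emph{bridging} from a threshold strictly larger than the given one down to the threshold itself. This bridging step is the only point at which something genuinely new with respect to the size function literature (\cite{FrLa01,dAFrLa}) is needed, and it is exactly here that \v{C}ech homology enters: since a lower level set need not be triangulable, its \v{C}ech homology may be infinite-dimensional, but the continuity axiom of \v{C}ech homology \cite{EiSt} lets us describe it as an inverse limit.

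For the variable $u$, I would fix $(\bar u,\bar v)\in\Delta^+$. By Monotonicity~\ref{Monotonicity} and Finiteness~\ref{Finiteness} the integer $\rho_{(X,\p),k}(u,\bar v)$ is non-increasing and bounded as $u\downarrow\bar u$, hence eventually constant; by Lemma~\ref{RightConstancyU} this eventual value is the common dimension of the (nested, hence coinciding) finite-dimensional subspaces $W:=\check{H}_k^{(u',\bar v)}(X,\p)\subseteq\check{H}_k(X\langle\p\leq\bar v\rangle)$ for $\bar u<u'\leq\hat u$. Since $\pi_k^{(\bar u,\bar v)}=\pi_k^{(u',\bar v)}\circ\pi_k^{(\bar u,u')}$, we get $\mathrm{im}\,\pi_k^{(\bar u,\bar v)}\subseteq W$, so it suffices to prove equality. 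The key observation is that $X\langle\p\leq\bar u\rangle=\bigcap_{u'>\bar u}X\langle\p\leq u'\rangle$ is the inverse limit of the compact spaces $X\langle\p\leq u'\rangle$ along inclusions, whence by the continuity axiom $\check{H}_k(X\langle\p\leq\bar u\rangle)=\varprojlim_{u'\to\bar u^+}\check{H}_k(X\langle\p\leq u'\rangle)$, with $\pi_k^{(\bar u,u')}$ the canonical projections. Choosing a cofinal sequence $u'_m\downarrow\bar u$ and fixing $u'_1<\bar v$, the images $\mathrm{im}\big(\check{H}_k(X\langle\p\leq u'_m\rangle)\to\check{H}_k(X\langle\p\leq u'_1\rangle)\big)=\check{H}_k^{(u'_m,u'_1)}(X,\p)$ form a non-increasing family of finite-dimensional spaces (Finiteness again), hence stabilize: the Mittag--Leffler condition holds, so $\mathrm{im}\,\pi_k^{(\bar u,u'_1)}$ equals this stabilized subspace, i.e. $\check{H}_k^{(u'',u'_1)}(X,\p)$ for $u''$ close enough to $\bar u$. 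Composing with $\pi_k^{(u'_1,\bar v)}$ yields $\mathrm{im}\,\pi_k^{(\bar u,\bar v)}=\pi_k^{(u'_1,\bar v)}\big(\check{H}_k^{(u'',u'_1)}(X,\p)\big)=\mathrm{im}\,\pi_k^{(u'',\bar v)}=W$, so $\rho_{(X,\p),k}(\bar u,\bar v)=\dim W=\lim_{u\to\bar u^+}\rho_{(X,\p),k}(u,\bar v)$.

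For the variable $v$, I would fix $(\bar u,\bar v)$ and put $V:=\mathrm{im}\,\pi_k^{(\bar u,\bar v)}=\check{H}_k^{(\bar u,\bar v)}(X,\p)$, a finite-dimensional subspace of $\check{H}_k(X\langle\p\leq\bar v\rangle)$ by Finiteness. For $v'>\bar v$ one has $\pi_k^{(\bar u,v')}=\pi_k^{(\bar v,v')}\circ\pi_k^{(\bar u,\bar v)}$, hence $\rho_{(X,\p),k}(\bar u,v')=\dim\pi_k^{(\bar v,v')}(V)=\dim V-\dim\ker\big(\pi_k^{(\bar v,v')}|_V\big)$. Since $X\langle\p\leq\bar v\rangle=\bigcap_{v'>\bar v}X\langle\p\leq v'\rangle$ is again an inverse limit of compacta, the continuity axiom gives $\check{H}_k(X\langle\p\leq\bar v\rangle)=\varprojlim_{v'\to\bar v^+}\check{H}_k(X\langle\p\leq v'\rangle)$ with the $\pi_k^{(\bar v,v')}$ as canonical projections, which are therefore jointly injective. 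Consequently the subspaces $\ker\big(\pi_k^{(\bar v,v')}|_V\big)$ of the finite-dimensional $V$ are nested with trivial intersection, hence vanish for $v'$ close enough to $\bar v$; thus $\rho_{(X,\p),k}(\bar u,v')=\dim V=\rho_{(X,\p),k}(\bar u,\bar v)$ there, which together with Lemma~\ref{RightConstancyV} gives right-continuity in $v$.

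The step I expect to be the main obstacle is the bridging in the $u$-variable: one cannot argue naively by finiteness, because the lower level sets $X\langle\p\leq u'\rangle$ need not be triangulable and their \v{C}ech homology may be infinite-dimensional; controlling this requires both the \v{C}ech continuity axiom (to realise $\check{H}_k(X\langle\p\leq\bar u\rangle)$ as an inverse limit) and a Mittag--Leffler / $\varprojlim^1$-type argument fuelled by the Finiteness Lemma. In the $v$-variable the same potential difficulty evaporates, since joint injectivity of the inverse-limit projections replaces the $\varprojlim^1$-vanishing argument and the finiteness of $V$ is all that is used.
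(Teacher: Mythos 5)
Your proof is correct, and its overall architecture --- first stabilizing via Lemmas~\ref{RightConstancyU} and~\ref{RightConstancyV}, then using the continuity axiom of \v{C}ech homology \cite{EiSt} to bridge from thresholds strictly above $\bar u$ (resp.\ $\bar v$) down to $\bar u$ (resp.\ $\bar v$) itself --- coincides with the paper's. The divergence is in how the inverse-limit step is executed. The paper writes $\check{H}_k^{(\bar u,\bar v)}(X,\p)=\mathrm{im}\,\pi_k^{(\bar u,\bar v)}\cong\mathrm{im}\,\varprojlim\pi_k^{(u',\bar v)}\cong\varprojlim\mathrm{im}\,\pi_k^{(u',\bar v)}\cong\check{H}_k^{(\hat u,\bar v)}(X,\p)$, justifying the commutation of $\varprojlim$ with images by asserting that the inverse limit of vector spaces is an exact functor, and treats the variable $v$ symmetrically. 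You instead pass to a cofinal sequence and run a Mittag--Leffler argument for $u$, and a joint-injectivity-of-the-canonical-projections argument for $v$, both fuelled by the Finiteness Lemma~\ref{Finiteness}. This is, if anything, the more careful route: $\varprojlim$ is only left exact on general inverse systems of vector spaces ($\varprojlim^1$ can be nonzero when the terms are infinite-dimensional, and the lower level sets here may well have infinite-dimensional \v{C}ech homology since they need not be triangulable), so the paper's appeal to exactness is in substance an appeal to the Mittag--Leffler condition supplied by the finite-dimensionality of the images $\check{H}_k^{(u'',u')}(X,\p)$ --- precisely the point you make explicit. Your handling of the $v$-variable is also a bit more economical, since the joint injectivity of the projections out of the inverse limit plus the finite-dimensionality of $V=\check{H}_k^{(\bar u,\bar v)}(X,\p)$ replaces any manipulation of limits of images (and in fact yields right-continuity in $v$ directly, without needing Lemma~\ref{RightConstancyV}). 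One small point to tidy up: you verify the stabilization of images only at the index $u'_1$, whereas concluding that the projection from the limit onto the first term surjects onto the stable subspace requires the Mittag--Leffler condition at every index $u'_n$; it holds there by the identical finiteness argument, so this is cosmetic.
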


\begin{proof}
In order to prove that $\lim_{u\to\bar
u^+}\rho_{(X,\p),k}(u,v)=\rho_{(X,\p),k}(\bar u,v)$, by the
Monotonicity Lemma \ref{Monotonicity}, it will suffice to show
that $\check{H}_k^{(\bar u,\bar v)}(X,\p)\cong\check{H}_k^{(\hat
u,\bar v)}(X,\p)$, where $\hat u$ is taken as in Lemma
\ref{RightConstancyU}. To this end, we consider the following
sequence of isomorphisms

{\setlength\arraycolsep{2pt}
\begin{eqnarray*}\check{H}_k^{(\bar u,\bar v)}(X,\p)&=&\textrm{im
}\pi_k^{(\bar u,\bar v)}\cong\textrm{im
}\underset{\leftarrow}\lim\,\pi_k^{(u',\bar
v)}\\
&\cong&\underset{\leftarrow}\lim\,\textrm{im }\pi_k^{(u',\bar
v)}=\underset{\leftarrow}\lim\,\check{H}_k^{(u',\bar
v)}(X,\p)\cong \check{H}_k^{(\hat u,\bar v)}(X,\p).
\end{eqnarray*}}

Let us now show how these equivalences can be obtained.

By the continuity of \v{C}ech Theory (cf. \cite[Thm. X,
3.1]{EiSt}), it holds that $\textrm{im }\pi_k^{(\bar u,\bar
v)}\cong\textrm{im }\underset{\leftarrow}\lim\,\pi_k^{(u',\bar
v)}$, where $\underset{\leftarrow}\lim\,\pi_k^{(u',\bar v)}$ is
the inverse limit of the inverse system of homomorphisms
$\pi_k^{(u',\bar v)}:\check{H}_k(X\langle\p\leq
u'\rangle)\to\check{H}_k(X\langle\p\leq \bar v\rangle)$ over the
directed set $\{u'\in\R:\bar u<u'\leq\hat u\}$ decreasingly
ordered.

Moreover, since the inverse limit of vector spaces is an exact
functor, it preserves epimorphisms and hence images. Therefore, it
holds that $\textrm{im }\underset{\leftarrow}\lim\,\pi_k^{(u',\bar
v)}\cong\underset{\leftarrow}\lim\,\textrm{im }\pi_k^{(u',\bar
v)}=\underset{\leftarrow}\lim\,\check{H}_k^{(u',\bar v)}(X,\p)$,
where  the last inverse limit is taken with respect to the inverse
system $\left(\check{H}_k^{(u',\bar
v)}(X,\p),\sigma^{(u',u'')}_k\right)$ over the directed set
$\{u'\in\R:\bar u<u'\leq\hat u\}$ decreasingly ordered, and
$\sigma_k^{(u',u'')}$ are the maps introduced in Lemma
\ref{RightConstancyU}.

Finally, $\underset{\leftarrow}\lim\,\check{H}_k^{(u',\bar
v)}(X,\p)\cong \check{H}_k^{(\hat u,\bar v)}(X,\p)$. Indeed,
$\underset{\leftarrow}\lim\,\check{H}_k^{(u',\bar v)}(X,\p)$ is
the inverse limit of a system of isomorphic vector spaces by Lemma
\ref{RightConstancyU}.

Analogously for the variable $v$, applying Lemma
\ref{RightConstancyV}.
\end{proof}

\subsubsection{Stability of the one-dimensional rank invariant}

The following Lemma \ref{Prelocalization} and Lemma \ref{Jump} can
be proved in the same way as the analogous results holding when
$k=0$ (see \cite{dAFrLa}).

\begin{lem}\label{Prelocalization}
The following statements hold:
\begin{enumerate}
    \item[$(i)$] For every $u<\min\p$, $\rho_{(X,\varphi),k}(u,v)=0$.
    \item[$(ii)$] For every $v\ge\max\p$, $\rho_{(X,\varphi),k}(u,v)$ is equal to the number of classes in $\check{H}_k(X)$,
     having at least one representative in $X\langle\p\leq
u\rangle$.
\end{enumerate}
\end{lem}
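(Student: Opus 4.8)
The plan is to treat the two statements separately, each time unwinding the definition $\rho_{(X,\varphi),k}(u,v)=\rank\,\pi_k^{(u,v)}$ where $\pi_k^{(u,v)}:\check H_k(X\langle\varphi\le u\rangle)\to\check H_k(X\langle\varphi\le v\rangle)$ is induced by inclusion.

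For $(i)$, the idea is simply that if $u<\min\varphi$ then there is no point $x\in X$ with $\varphi(x)\le u$, so $X\langle\varphi\le u\rangle=\emptyset$. Hence $\check H_k(X\langle\varphi\le u\rangle)$ is the trivial group (in every degree $k$, including $k=0$, since \v{C}ech homology of the empty space vanishes), and therefore $\pi_k^{(u,v)}$ has rank $0$. This is immediate and requires no real work.

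For $(ii)$, fix $v\ge\max\varphi$, so that $X\langle\varphi\le v\rangle=X$ and $\pi_k^{(u,v)}:\check H_k(X\langle\varphi\le u\rangle)\to\check H_k(X)$ is the map induced by the inclusion $\iota:X\langle\varphi\le u\rangle\hookrightarrow X$. By definition of rank, $\rho_{(X,\varphi),k}(u,v)=\dim\big(\operatorname{im}\iota_*\big)=\dim\big(\check H_k(X)/\ker(\text{quotient by }\operatorname{im}\iota_*)\big)$; more to the point, since we are over a field, $\dim\operatorname{im}\iota_*$ equals the number of linearly independent classes in $\check H_k(X)$ that are hit by $\iota_*$, i.e. that admit a representative cycle supported in $X\langle\varphi\le u\rangle$. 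To phrase this as ``the number of classes in $\check H_k(X)$ having at least one representative in $X\langle\varphi\le u\rangle$'', I would observe that the set of such classes is exactly the subspace $\operatorname{im}\iota_*\subseteq\check H_k(X)$ (it is a subspace because $\iota_*$ is linear), and its ``number of classes'' is understood in the size-theory sense as its dimension, which is precisely $\rank\iota_*$. Thus the two quantities coincide by definition. Here one uses the Finiteness Lemma~\ref{Finiteness} to know $\rank\iota_*<+\infty$, so the statement is not vacuous.

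I do not anticipate a genuine obstacle: both parts are essentially bookkeeping with the definition, and the only subtlety is the interpretation of ``number of classes having a representative in the sublevel set'' as the dimension of the image subspace $\operatorname{im}\iota_*$, which is the standard convention inherited from size theory (cf. \cite{dAFrLa}). The argument is identical to the $k=0$ case treated there, with singular/simplicial chains replaced by the appropriate \v{C}ech-theoretic description of classes, so no new ideas beyond those of \cite{dAFrLa} are needed.
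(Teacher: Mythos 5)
Your proof is correct and is essentially the argument the paper has in mind: the paper omits the proof, deferring to the $k=0$ size-function case in \cite{dAFrLa}, which is exactly this unwinding of the definition (empty sublevel set for $(i)$; identification of $\operatorname{im}\,\pi_k^{(u,v)}$ with the subspace of classes of $\check{H}_k(X)$ represented in $X\langle\p\leq u\rangle$, with ``number of classes'' read as its dimension, for $(ii)$). No gaps.
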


Since, for $u_1\le u_2<v_1\le v_2$, the number of homology classes
born between $u_1$ and $u_2$ and still alive at $v_1$ is certainly
not smaller than those still alive at $v_2$, we have the next
result.

\begin{lem}[Jump Monotonicity]\label{Jump}
Let $u_1,u_2,v_1,v_2$ be real numbers such that
$u_1\le u_2<v_1\le v_2$. It holds that
$$\rho_{(X,\varphi),k}(u_2,v_1)-\rho_{(X,\varphi),k}(u_1,v_1)\ge
\rho_{(X,\varphi),k}(u_2,v_2)-\rho_{(X,\varphi),k}(u_1,v_2).$$
\end{lem}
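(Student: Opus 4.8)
The plan is to translate the inequality into a statement about ranks of linear maps and prove it via a ``rank-count'' argument on a commutative diagram. Concretely, write $A_i=\check{H}_k(X\langle\p\leq u_i\rangle)$ and $B_j=\check{H}_k(X\langle\p\leq v_j\rangle)$, so that the inclusions $u_1\le u_2<v_1\le v_2$ induce a commutative diagram whose maps I will call $\alpha:A_1\to A_2$, $\beta:B_1\to B_2$, together with $p_{ij}:A_i\to B_j$ for the four relevant pairs, satisfying $p_{2j}\circ\alpha=p_{1j}$ and $\beta\circ p_{i1}=p_{i2}$. By definition $\rho_{(X,\p),k}(u_i,v_j)=\rank p_{ij}$, and by the Finiteness Lemma~\ref{Finiteness} all four numbers are finite, so subtraction is legitimate. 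The inequality to prove becomes
\[
\rank p_{21}-\rank p_{11}\ \ge\ \rank p_{22}-\rank p_{12},
\]
i.e.\ $\rank p_{21}-\rank p_{22}\ge \rank p_{11}-\rank p_{12}$.

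The key step is to interpret each difference $\rank p_{i1}-\rank p_{i2}$ as the dimension of a suitable quotient. Since $p_{i2}=\beta\circ p_{i1}$, the image $\mathrm{im}\,p_{i2}$ is exactly $\beta(\mathrm{im}\,p_{i1})$, so $\rank p_{i1}-\rank p_{i2}=\dim\bigl(\mathrm{im}\,p_{i1}\cap\ker\beta\bigr)$ — the dimension of the part of $\mathrm{im}\,p_{i1}$ that $\beta$ kills. Thus the inequality reduces to
\[
\dim\bigl(\mathrm{im}\,p_{21}\cap\ker\beta\bigr)\ \ge\ \dim\bigl(\mathrm{im}\,p_{11}\cap\ker\beta\bigr),
\]
and this now follows immediately from the monotonicity $\mathrm{im}\,p_{11}=\mathrm{im}(p_{21}\circ\alpha)\subseteq\mathrm{im}\,p_{21}$, which gives $\mathrm{im}\,p_{11}\cap\ker\beta\subseteq\mathrm{im}\,p_{21}\cap\ker\beta$. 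Taking dimensions of nested finite-dimensional subspaces yields the claim. Alternatively, one can phrase the same computation purely in terms of the rank invariant values already established: $\rho(u_i,v_1)-\rho(u_i,v_2)$ counts classes alive at $v_1$ but dead at $v_2$ among those born by $u_i$, and enlarging $u_1$ to $u_2$ only enlarges this collection.

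I expect the main (indeed only) obstacle to be bookkeeping rather than conceptual: one must be careful that $\mathrm{im}\,p_{i2}=\beta(\mathrm{im}\,p_{i1})$ really uses commutativity of the right-hand square and not merely existence of $\beta$, and that all four ranks are finite so that the rearrangement of the inequality is valid — the latter is guaranteed by Lemma~\ref{Finiteness}. Since, as the paper notes, the $k=0$ case is proved identically in \cite{dAFrLa}, the write-up can be kept short: state the diagram, record the identity $\rank p_{i1}-\rank p_{i2}=\dim(\mathrm{im}\,p_{i1}\cap\ker\beta)$, invoke $\mathrm{im}\,p_{11}\subseteq\mathrm{im}\,p_{21}$, and conclude.
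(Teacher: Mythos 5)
Your proof is correct and is essentially the argument the paper has in mind: it makes precise, via the identity $\rank p_{i1}-\rank p_{i2}=\dim(\mathrm{im}\,p_{i1}\cap\ker\beta)$ and the inclusion $\mathrm{im}\,p_{11}\subseteq\mathrm{im}\,p_{21}$, exactly the heuristic the paper states before the lemma (classes born by $u_i$ and dying between $v_1$ and $v_2$), which is the generalization to arbitrary $k$ of the argument in \cite{dAFrLa} that the authors cite instead of writing out. The two points you flag as needing care --- finiteness of the four ranks (Lemma~\ref{Finiteness}) and commutativity of the squares --- are indeed the only hypotheses used, so the write-up is complete.
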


Lemma \ref{Jump} justifies the following definitions of
multiplicity. Since we are working with continuous instead of tame
functions, we adopt the definitions introduced in \cite{FrLa01}
rather than those of \cite{CoEdHa07}. Although based on the same
idea, the difference relies in the computation of multiplicity on
a varying grid, instead of a fixed one. So we can work with an
infinite number of (possibly accumulating) points of positive
multiplicity. Due to the lack of a well-established terminology
for points with positive multiplicity, we call them {\em
cornerpoints}, which is normally used for size functions.

\begin{defn}[Proper cornerpoint]\label{Proper}
For every point $p=(u,v)\in\Delta^+$, we define the number
$\mu_k(p)$ as the minimum over all the positive real numbers
$\varepsilon$, with $u+\varepsilon<v-\varepsilon$, of
$$
\rho_{(X,\p),k}(u+\varepsilon,v-\varepsilon)-\rho_{(X,\p),k}(u-\varepsilon,v-\varepsilon)
-\rho_{(X,\p),k}(u+\varepsilon,v+\varepsilon)+\rho_{(X,\p),k}(u-\varepsilon,v+\varepsilon).
$$
The number $\mu_k(p)$ will be called the \emph{multiplicity} of
$p$ for $\rho_{(X,\p),k}$. Moreover, we shall call a {\em proper
cornerpoint for $\rho_{(X,\p),k}$} any point $p\in\Delta^+$ such
that the number $\mu_k(p)$ is strictly positive.
\end{defn}

\begin{defn}[Cornerpoint at infinity]\label{Cornerpoint}
For every vertical line $r$, with equation $u=\bar u$, $\bar
u\in\R$, let us identify $r$ with $(\bar u,\infty)\in\Delta^*$,
and define the number $\mu_k(r)$ as the minimum over all the
positive real numbers $\varepsilon$, with $\bar
u+\varepsilon<1/\varepsilon$, of
$$
\rho_{(X,\p),k}\left(\bar
u+\varepsilon,\frac{1}{\varepsilon}\right)-\rho_{(X,\p),k}\left(\bar
u-\varepsilon,\frac{1}{\varepsilon}\right).
$$
The number $\mu_k(r)$ will be called the \emph{multiplicity} of
$r$ for $\rho_{(X,\p),k}$. When this finite number is strictly
positive, we call $r$ a {\em cornerpoint at infinity for
$\rho_{(X,\p),k}$}.
\end{defn}

The concept of cornerpoint allows us to introduce a representation
of the rank invariant, based on the following definition
\cite{CoEdHa07}.

\begin{defn}[Persistence diagram]
The {\em persistence diagram} $D_k(X,\p)\subset\bar\Delta^*$ is
the multiset of all cornerpoints (both proper and at infinity) for
$\rho_{(X,\p),k}$, counted with their multiplicity, union the
points of $\Delta$, counted with infinite multiplicity.
\end{defn}

In order to show that persistence diagrams completely describe
rank invariants, we give some technical results concerning
cornerpoints.

The Monotonicity Lemma \ref{Monotonicity}, and Lemmas \ref{Right}
(Right-Continuity) and \ref{Jump} (Jump Monotonicity) imply the
following result, by the same arguments as in \cite{FrLa01}.

\begin{prop}[Propagation of Discontinuities]\label{Propagation}
If $\bar p=(\bar u,\bar v)$ is a proper cornerpoint for
$\rho_{(X,\p),k}$, then the following statements hold:
\begin{enumerate}
\item[$(i)$] If $\bar u \leq u < \bar v$, then $\bar v$ is a
discontinuity point for $\rho_{(X,\p),k}(u,\cdot)$; \item[$(ii)$]
If $\bar u < v < \bar v$, then $\bar u$ is a discontinuity point
for $\rho_{(X,\p),k}(\cdot,v)$.
\end{enumerate}
If $\bar r = (\bar u,\infty)$ is a cornerpoint at infinity for
$\rho_{(X,\p),k}$, then it holds that
\begin{enumerate}
\item[$(iii)$] If $\bar u<v$, then $\bar u$ is a discontinuity
point for $\rho_{(X,\p),k}(\cdot,v)$.
\end{enumerate}
\end{prop}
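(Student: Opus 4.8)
The plan is to derive each of the three assertions from the definition of multiplicity (Definitions \ref{Proper} and \ref{Cornerpoint}) together with monotonicity, right-continuity, and jump monotonicity, exactly as in the size-function case \cite{FrLa01}. The unifying observation is that a point has positive multiplicity precisely when a certain alternating sum of rank-invariant values stays bounded away from zero as the side $\varepsilon$ of the small square (or half-strip) around it shrinks; one then transfers this persistent discrepancy onto one-variable sections by letting only one of the two parameters vary.

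For part $(i)$: fix $u$ with $\bar u\le u<\bar v$ and suppose, for contradiction, that $\rho_{(X,\p),k}(u,\cdot)$ is continuous at $\bar v$. Since this section is non-increasing in $v$ (Lemma \ref{Monotonicity}) and right-continuous (Lemma \ref{Right}), continuity at $\bar v$ means $\lim_{\eta\to 0^+}\rho_{(X,\p),k}(u,\bar v-\eta)=\rho_{(X,\p),k}(u,\bar v)$, i.e. the left and right limits agree. Now I apply Jump Monotonicity (Lemma \ref{Jump}) with the four points obtained from $u_1=\bar u-\varepsilon$, $u_2=\bar u+\varepsilon$ (or $u_1=u_2=\bar u$ when $u=\bar u$, handled by a limiting argument using right-continuity in $u$), $v_1=\bar v-\varepsilon$, $v_2=\bar v+\varepsilon$: the alternating sum defining $\mu_k(\bar p)$ is squeezed, as $\varepsilon\to 0^+$, between differences of one-variable increments of the section $\rho_{(X,\p),k}(u,\cdot)$ across $\bar v$. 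If that section has no jump at $\bar v$, those increments tend to $0$, forcing $\mu_k(\bar p)=0$, contradicting the hypothesis that $\bar p$ is a proper cornerpoint. Part $(ii)$ is the mirror argument with the roles of the two variables exchanged, using that $\rho_{(X,\p),k}(\cdot,v)$ is non-decreasing (Lemma \ref{Monotonicity}) and right-continuous (Lemma \ref{Right}). Part $(iii)$ is the analogue for a cornerpoint at infinity: here $\mu_k(\bar r)$ is the limit of $\rho_{(X,\p),k}(\bar u+\varepsilon,1/\varepsilon)-\rho_{(X,\p),k}(\bar u-\varepsilon,1/\varepsilon)$, and for $v$ with $\bar u<v$ one uses monotonicity in $v$ (so the section at height $1/\varepsilon$ is dominated by, and eventually controlled by, the section at height $v$) together with right-continuity in $u$ to conclude that a positive $\mu_k(\bar r)$ produces a genuine jump of $\rho_{(X,\p),k}(\cdot,v)$ at $\bar u$.

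The step I expect to be the main obstacle is bookkeeping the boundary cases where the chosen $u$ (resp. $v$) coincides with $\bar u$ (resp. $\bar v$), since then the four-point square in Definition \ref{Proper} degenerates and one cannot directly plug $u=\bar u$ into Lemma \ref{Jump} with $u_1<u_2$. The resolution is to take $u=\bar u+\eta$ for small $\eta>0$, run the argument to produce a discontinuity of $\rho_{(X,\p),k}(\bar u+\eta,\cdot)$ at $\bar v$ whose jump is at least $\mu_k(\bar p)$ uniformly in $\eta$, and then let $\eta\to 0^+$ invoking the right-continuity in $u$ from Lemma \ref{Right} to pass the discontinuity to the section at $\bar u$ itself. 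Apart from this, everything is a direct transcription of the size-theory proof, so I will simply cite \cite{FrLa01} for the remaining routine details.
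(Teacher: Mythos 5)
Your proposal is correct and follows essentially the same route as the paper, which gives no detailed argument but states that the result follows from the Monotonicity, Right-Continuity and Jump Monotonicity Lemmas by the arguments of \cite{FrLa01} — exactly the three ingredients you use, and you correctly isolate the only delicate point (the boundary case $u=\bar u$, resolved by right-continuity in $u$). The one cosmetic remark: in part $(iii)$ the comparison between the increments at heights $v$ and $1/\varepsilon$ is an instance of Jump Monotonicity rather than plain monotonicity in $v$, but this is the tool you already invoke elsewhere.
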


We observe that any open arcwise connected neighborhood in
$\Delta^+$ of a discontinuity point for $\rho_{(X,\p),k}$ contains
at least one discontinuity point in the variable $u$ or $v$.
Moreover, as a consequence of the Jump Monotonicity Lemma
\ref{Jump}, discontinuity points in the variable $u$ propagate
downwards, while discontinuity points in the variable $v$
propagate rightwards. So, by applying the Finiteness Lemma
\ref{Finiteness}, we obtain the proposition below.

\begin{prop}\label{Weps}
Let $k\in\mathbb{Z}$. For every point $\bar p=(\bar u,\bar
v)\in\Delta^+$, a real number $\varepsilon>0$ exists such that the
open set $$W_{\varepsilon}(\bar p)=\{(u,v)\in\R^2:|u-\bar
u|<\varepsilon,|v-\bar v|<\varepsilon, u\neq\bar u, v\neq\bar
v\}$$ is contained in $\Delta^+$, and does not contain any
discontinuity point for $\rho_{(X,\p),k}$.
\end{prop}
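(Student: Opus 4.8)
The plan is to dispose of the containment $W_\varepsilon(\bar p)\subset\Delta^+$ first (it is immediate), and then to exclude discontinuity points from each of the four ``quarter-neighbourhoods'' that make up a small $W_\varepsilon(\bar p)$, by combining the Finiteness, Monotonicity, Right-Continuity and Jump Monotonicity lemmas with the propagation phenomenon recalled just above.

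First I would note that, since $\bar p=(\bar u,\bar v)\in\Delta^+$, we have $\bar u<\bar v$; hence we may fix $\varepsilon_0>0$ so small that $\bar u+\varepsilon_0<\bar v-\varepsilon_0$, which forces the closed square $[\bar u-\varepsilon_0,\bar u+\varepsilon_0]\times[\bar v-\varepsilon_0,\bar v+\varepsilon_0]$, and a fortiori $W_\varepsilon(\bar p)$ for every $0<\varepsilon\le\varepsilon_0$, to lie in $\Delta^+$. It then suffices to show that $\bar p$ is not a limit of discontinuity points of $\rho_{(X,\p),k}$ contained in $W_{\varepsilon_0}(\bar p)$: indeed, if the conclusion failed, picking a discontinuity point in $W_{1/n}(\bar p)$ for each large $n$ would produce precisely such a sequence.

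So I would argue by contradiction, assuming a sequence of discontinuity points $p_n=(u_n,v_n)\in W_{\varepsilon_0}(\bar p)$ with $p_n\to\bar p$, $u_n\ne\bar u$, $v_n\ne\bar v$. The lines $u=\bar u$ and $v=\bar v$ split $W_{\varepsilon_0}(\bar p)$ into four open rectangles, so by the pigeonhole principle infinitely many $p_n$ lie in one of them, say (up to the evident symmetries) in $Q=(\bar u-\varepsilon_0,\bar u)\times(\bar v,\bar v+\varepsilon_0)$. Around each such $p_n$ I would take a small open disc inside $Q$; by the recalled observation that every open arcwise connected neighbourhood of a discontinuity point contains a discontinuity point in the variable $u$ or in the variable $v$, each disc contains one, and after passing to a subsequence I may assume they are all of the same type---say $q_n=(a_n,b_n)\in Q$ are discontinuity points of $\rho_{(X,\p),k}(\cdot,b_n)$ at $a_n$ (the variable-$v$ case being symmetric, with rightward propagation in place of the downward propagation below). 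Since $a_n\to\bar u$ with $a_n\ne\bar u$, a further subsequence makes the $a_n$ strictly monotone, hence pairwise distinct.

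Then I would fix the slice $v^{*}=(\bar u+\bar v)/2\in(\bar u,\bar v)$; since $a_n\to\bar u<v^{*}$ and $b_n\to\bar v>v^{*}$, for $n$ large we have $a_n<v^{*}\le b_n$. By the Jump Monotonicity Lemma~\ref{Jump} (together with the Right-Continuity Lemma~\ref{Right}), discontinuity of $\rho_{(X,\p),k}(\cdot,b_n)$ at $a_n$ propagates downwards, so $\rho_{(X,\p),k}(\cdot,v^{*})$ is discontinuous at $a_n$ for all large $n$. But $\rho_{(X,\p),k}(\cdot,v^{*})$ is non-decreasing by Lemma~\ref{Monotonicity} and $\N$-valued, so it jumps by at least $1$ at each of the pairwise distinct points $a_n\to\bar u$, whence $\lim_{u\to\bar u^{-}}\rho_{(X,\p),k}(u,v^{*})=+\infty$; this contradicts $\rho_{(X,\p),k}(u,v^{*})\le\rho_{(X,\p),k}(\bar u,v^{*})<+\infty$, the finiteness of the right-hand side being the Finiteness Lemma~\ref{Finiteness}. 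The same scheme closes the three remaining quadrants and the variable-$v$ alternative, always reducing to the fact that a bounded monotone $\N$-valued function of one coordinate cannot have accumulating jumps. I expect the only non-routine ingredient to be this propagation step---transporting all the discontinuity points onto a single horizontal (or vertical) slice---which is exactly where Jump Monotonicity enters; the remainder is pigeonholing plus the elementary bound on the number of discontinuities of a bounded monotone integer-valued function.
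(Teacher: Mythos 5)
Your argument is correct and is exactly the route the paper sketches just before the statement: reduce to discontinuity points in a single variable, propagate them (downwards for $u$, rightwards for $v$) via Jump Monotonicity onto a fixed slice, and contradict the Finiteness Lemma through the boundedness of a monotone $\N$-valued function with accumulating unit jumps. The paper leaves this as an observation; your write-up merely supplies the pigeonholing and subsequence details.
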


As a simple consequence of Lemma \ref{Prelocalization} and
Proposition \ref{Propagation} (Propagation of Discontinuities), we
have the following proposition.

\begin{prop}[Localization of Cornerpoints]\label{Localization}
If $\bar p=(\bar u,\bar v)$ is a proper cornerpoint for
$\rho_{(X,\varphi),k}$, then $\bar
p\in\{(u,v)\in\Delta^+:\min\p\leq u<v\leq\max\p\}$.
\end{prop}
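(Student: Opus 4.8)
The plan is to argue by contradiction, establishing the two inequalities $\bar u\ge\min\p$ and $\bar v\le\max\p$ separately; the remaining inequality $\bar u<\bar v$ holds automatically since $(\bar u,\bar v)\in\Delta^+$. In both cases the mechanism is the same: being a proper cornerpoint, $(\bar u,\bar v)$ forces a discontinuity of $\rho_{(X,\p),k}$ (Proposition \ref{Propagation}), whereas the explicit formulas of Lemma \ref{Prelocalization} show that $\rho_{(X,\p),k}$ is locally constant, in the relevant variable, outside the strip $\{\min\p\le u<v\le\max\p\}$, so that no such discontinuity can occur there.

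\emph{Step 1: $\bar v\le\max\p$.} Assume, for contradiction, that $\bar v>\max\p$. Since $\bar u\le\bar u<\bar v$, Proposition \ref{Propagation}$(i)$ applied with $u=\bar u$ yields that $\bar v$ is a discontinuity point of the function $v\mapsto\rho_{(X,\p),k}(\bar u,v)$. On the other hand, pick $\eta>0$ with $\bar v-\eta>\max\p$; then every $v\in(\bar v-\eta,\bar v+\eta)$ satisfies $v\ge\max\p$, and Lemma \ref{Prelocalization}$(ii)$ gives that $\rho_{(X,\p),k}(\bar u,v)$ equals the number of classes in $\check{H}_k(X)$ having at least one representative in $X\langle\p\le\bar u\rangle$ — a value that does not depend on $v$. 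Hence $v\mapsto\rho_{(X,\p),k}(\bar u,v)$ is constant, in particular continuous, on $(\bar v-\eta,\bar v+\eta)$, contradicting the discontinuity at $\bar v$.

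\emph{Step 2: $\bar u\ge\min\p$.} Assume, for contradiction, that $\bar u<\min\p$. Since also $\bar u<\bar v$, we may fix a real number $v$ with $\bar u<v<\min\{\bar v,\min\p\}$, so that $\bar u<v<\bar v$. Then Proposition \ref{Propagation}$(ii)$ tells us that $\bar u$ is a discontinuity point of the function $u\mapsto\rho_{(X,\p),k}(u,v)$. But choosing $\eta>0$ with $\eta<\min\{v-\bar u,\min\p-\bar u\}$, Lemma \ref{Prelocalization}$(i)$ gives $\rho_{(X,\p),k}(u,v)=0$ for every $u\in(\bar u-\eta,\bar u+\eta)$; thus this function is identically $0$ near $\bar u$ and hence continuous there, a contradiction. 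This completes the proof.

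I do not expect a genuine obstacle here: the content is entirely supplied by Lemma \ref{Prelocalization} and Proposition \ref{Propagation}, and the only thing requiring a little care is the bookkeeping of the real numbers involved — choosing $u=\bar u$ in part $(i)$, squeezing $v$ strictly between $\bar u$ and $\min\{\bar v,\min\p\}$ in part $(ii)$, and shrinking the chosen neighbourhoods so that they remain inside $\Delta^+$ and inside the region where the formulas of Lemma \ref{Prelocalization} apply.
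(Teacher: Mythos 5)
Your argument is correct and follows exactly the route the paper indicates: the paper gives no written proof but states that the proposition is ``a simple consequence of Lemma \ref{Prelocalization} and Proposition \ref{Propagation}'', which is precisely the combination you use. The bookkeeping of neighbourhoods and the choice of $v$ in Step~2 are handled correctly, so nothing is missing.
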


By applying Proposition \ref{Propagation} and Proposition
\ref{Weps}, and recalling the finiteness of $\rho_{(X,\varphi),k}$
(Finiteness Lemma \ref{Finiteness}), it is easy to prove the
following result.

\begin{prop}[Local Finiteness of Cornerpoints]\label{Local}
For each strictly positive real number $\varepsilon$,
$\rho_{(X,\varphi),k}$ has, at most, a finite number of
cornerpoints in $\{(u,v)\in\R^2:u+\varepsilon< v\}$.
\end{prop}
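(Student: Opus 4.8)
The plan is to argue by contradiction, using a compactness argument together with the elementary fact that $\rho_{(X,\p),k}$ becomes a monotone, finite, integer-valued function of one real variable once the other variable is frozen. Fix $\varepsilon>0$ and suppose there were infinitely many distinct proper cornerpoints $p_m=(u_m,v_m)$ with $u_m+\varepsilon<v_m$. By the Localization of Cornerpoints (Proposition \ref{Localization}) they all lie in the bounded set $\{(u,v):\min\p\le u<v\le\max\p\}$, so by compactness of its closure a subsequence converges to a point $\bar p=(\bar u,\bar v)$, which we may take to be distinct from every $p_m$; since $u_m+\varepsilon<v_m$ for all $m$, the limit satisfies $\bar u+\varepsilon\le\bar v$, hence $\bar p\in\Delta^+$.

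Next I would apply Proposition \ref{Weps} to get $\eta>0$ such that $W_\eta(\bar p)\subset\Delta^+$ contains no discontinuity point of $\rho_{(X,\p),k}$. Since $W_\eta(\bar p)$ is defined so as to \emph{exclude} the two lines $u=\bar u$ and $v=\bar v$, one splits into three exhaustive cases according to how the $p_m$ approach $\bar p$. If infinitely many $p_m$ have $u_m=\bar u$, they are points $(\bar u,v_m)$ with the $v_m$ distinct and tending to $\bar v$, and Proposition \ref{Propagation}$(i)$ (applied with $u=\bar u$) makes each $v_m$ a discontinuity of $v\mapsto\rho_{(X,\p),k}(\bar u,v)$; but this function is non-increasing (Lemma \ref{Monotonicity}) and finite (Lemma \ref{Finiteness}), hence it has only finitely many discontinuities in any closed bounded subinterval of $(\bar u,+\infty)$, a contradiction. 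The case of infinitely many $p_m$ with $v_m=\bar v$ is symmetric, using Proposition \ref{Propagation}$(ii)$ with a fixed $v\in(\bar u,\bar v)$ and the monotonicity of $u\mapsto\rho_{(X,\p),k}(u,v)$. In the only remaining case there are infinitely many $p_m$ with $u_m\ne\bar u$ and $v_m\ne\bar v$; for large $m$ such a $p_m$ lies in $W_\eta(\bar p)$, while Proposition \ref{Propagation}$(i)$ applied with $u=u_m$ (legitimate since $u_m<v_m$) shows that $v_m$ is a discontinuity of $\rho_{(X,\p),k}(u_m,\cdot)$, i.e. $p_m$ itself is a discontinuity point of $\rho_{(X,\p),k}$ — contradicting the choice of $\eta$. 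Hence only finitely many proper cornerpoints satisfy $u+\varepsilon<v$.

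For completeness, the analogous statement for cornerpoints at infinity also holds: by Lemma \ref{Prelocalization}$(i)$ and a short estimate based on Lemma \ref{Prelocalization}$(ii)$, the abscissa of any cornerpoint at infinity lies in $[\min\p,\max\p]$, and, fixing any $v_0>\max\p$, Proposition \ref{Propagation}$(iii)$ turns each such abscissa into a discontinuity of the non-decreasing function $u\mapsto\rho_{(X,\p),k}(u,v_0)$, which is bounded above by $\rank\,\check{H}_k(X)<\infty$ because $X\langle\p\le v_0\rangle=X$ is triangulable; so there are only finitely many such cornerpoints.

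I expect the main obstacle to be precisely the handling of the two excluded lines through $\bar p$: Proposition \ref{Weps} immediately disposes of ``generic'' accumulation of cornerpoints, but accumulation along a vertical or a horizontal line through the limit point has to be ruled out separately, and it is there that the Finiteness Lemma \ref{Finiteness} is genuinely needed — the lower level sets of a continuous function being in general non-triangulable, finiteness of $\rho_{(X,\p),k}$ is not automatic and must be invoked to guarantee that the one-variable restrictions have only finitely many jumps on a bounded interval.
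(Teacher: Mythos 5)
Your proof is correct and uses exactly the ingredients the paper itself names for this result (Propositions \ref{Propagation} and \ref{Weps}, the Finiteness Lemma \ref{Finiteness}, plus Localization and Monotonicity), filling in the standard compactness-and-contradiction argument that the paper leaves to the reader; the careful separate treatment of accumulation along the two lines excluded from $W_\eta(\bar p)$ is precisely the point that needs checking, and you handle it correctly via the monotone, finite, integer-valued one-variable restrictions.
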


We observe that it is easy to provide examples of persistence
diagrams containing an infinite number of proper cornerpoints,
accumulating onto the diagonal $\Delta$.

The following Theorem \ref{k-triangle} shows that persistence
diagrams uniquely determine one-dimensional rank invariants (the
inverse also holds by definition of persistence diagram). It is a
consequence of the definitions of multiplicity (Definitions
\ref{Proper} and \ref{Cornerpoint}), together with the previous
results about cornerpoints, and the Right-Continuity Lemma
\ref{Right}, in the same way as done in \cite{FrLa01} for size
functions. We remark that a similar result was given in
\cite{CoEdHa07}, under the name of \emph{$k$-triangle Lemma}. Our
Representation Theorem differs from the $k$-triangle Lemma in two
respects. Firstly, our hypotheses on the function $\p$ are weaker.
Secondly, the $k$-triangle Lemma focuses not on all the set
$\Delta^+$, but only on the points having coordinates that are not
homological critical values.

\begin{thm}[Representation Theorem]\label{k-triangle}
For every $(\bar u,\bar v)\in\Delta^+$, we have
$$
\rho_{(X,\p),k}(\bar u,\bar v)=\sum_{(u,v)\in\Delta^*\atop
u\leq\bar u,\,v>\bar v}\mu_k((u,v)).
$$
\end{thm}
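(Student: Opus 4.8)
The plan is to prove the identity
$$\rho_{(X,\p),k}(\bar u,\bar v)=\sum_{(u,v)\in\Delta^*,\ u\leq\bar u,\,v>\bar v}\mu_k((u,v))$$
by a discretization–plus–limit argument that parallels the one used for size functions in \cite{FrLa01}. First I would reduce to a finite sum: by the Localization of Cornerpoints (Proposition \ref{Localization}) and the Local Finiteness of Cornerpoints (Proposition \ref{Local}), only finitely many proper cornerpoints, together with finitely many cornerpoints at infinity, can satisfy $u\leq\bar u$ and $v>\bar v$; indeed all proper ones lie in the compact triangle $\{\min\p\le u<v\le\max\p\}$ and are locally finite away from the diagonal, and a cornerpoint at infinity $(u,\infty)$ with $u\le\bar u$ exists only for $u$ in the bounded interval $[\min\p,\bar u]$, where again local finiteness applies. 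Hence the right-hand side is a well-defined finite natural number, and it suffices to prove the equality.

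Next I would set up a grid. Choose a finite set of abscissae $a_0<a_1<\dots<a_m$ and ordinates $b_0<b_1<\dots<b_N$ such that every cornerpoint (proper or at infinity) lying in the region $\{u\le\bar u,\ v>\bar v\}$ has its coordinates among the $a_i$ and $b_j$, with $a_m=\bar u$ and $b_0=\bar v$ appearing in the grid, and such that $b_N\ge\max\p$; this is possible by the local finiteness results just invoked. The key computation is then a telescoping identity: expressing $\rho_{(X,\p),k}(\bar u,\bar v)$ as a double alternating sum over the grid cells of second differences of $\rho_{(X,\p),k}$, and using Lemma \ref{Prelocalization}$(i)$ (which kills all terms with abscissa $<\min\p$) together with the behaviour for $v\ge\max\p$. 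Each grid-cell contribution is, by the Jump Monotonicity Lemma \ref{Jump} and the Right-Continuity Lemma \ref{Right}, exactly the multiplicity $\mu_k$ of the cornerpoint located at the appropriate corner of that cell (or $0$ if there is none), because the minimum defining $\mu_k$ in Definitions \ref{Proper} and \ref{Cornerpoint} is attained once $\varepsilon$ is small enough to confine $W_\varepsilon$ to a single grid cell free of other discontinuities (Proposition \ref{Weps}). The cells adjacent to the line $v=+\infty$ — i.e. the ones accounting for cornerpoints at infinity with $u\le\bar u$ — are handled separately using part $(ii)$ of Lemma \ref{Prelocalization} to identify the $v\to+\infty$ limit of $\rho_{(X,\p),k}(u,v)$ with the count of classes in $\check H_k(X)$ having a representative in $X\langle\p\le u\rangle$, which matches Definition \ref{Cornerpoint}.

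I expect the main obstacle to be the rigorous passage from the finite grid sum to the full sum over $\Delta^*$, i.e. controlling the contribution of proper cornerpoints accumulating onto the diagonal. The subtlety is that there may be infinitely many cornerpoints with $u\le\bar u$ and $v>\bar v$ only if they accumulate at a point with $v=\bar v$ — but $\bar v>\bar u\ge$ their abscissae is not automatic, so I must use $v>\bar v$ strictly together with Local Finiteness (Proposition \ref{Local}, applied with $\varepsilon$ chosen below $\bar v-$ (a suitable abscissa bound)) to rule this out; the genuinely delicate case is when cornerpoints accumulate toward $(\bar v,\bar v)\in\Delta$ from inside the region, and one shows via the finiteness of $\rho_{(X,\p),k}(\bar u,\bar v)$ itself (Finiteness Lemma \ref{Finiteness}) that the sum of their multiplicities is bounded, hence the tail is eventually zero. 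Once these finiteness bookkeeping points are settled, the telescoping identity and the cell-by-cell identification with $\mu_k$ close the argument, exactly as in the size-function case of \cite{FrLa01}, so the remaining details are routine and can be omitted.
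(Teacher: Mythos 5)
Your proposal follows essentially the same route the paper intends: the paper gives no details for this theorem, deferring to the size-function argument of \cite{FrLa01}, and your grid/telescoping sketch uses exactly the ingredients the paper lists (the multiplicity definitions, Lemmas \ref{Prelocalization}, \ref{Jump} and \ref{Right}, and Propositions \ref{Propagation}--\ref{Local}). One minor remark: the ``delicate'' diagonal-accumulation case you flag at the end is vacuous, since the whole region $\{u\leq\bar u,\ v>\bar v\}$ is contained in $\{(u,v):u+\varepsilon<v\}$ for $\varepsilon=\bar v-\bar u>0$, so Proposition \ref{Local} alone already gives finiteness of the sum without any appeal to the finiteness of $\rho_{(X,\p),k}(\bar u,\bar v)$.
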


As a consequence of the Representation Theorem \ref{k-triangle}
any distance between persistence diagrams induces a distance
between one-dimensional rank invariants. This justifies the
following definition \cite{CoEdHa07,dAFrLa}.

\begin{defn}[Matching distance]
Let $X$ be a triangulable space endowed with continuous functions
$\p,\s:X\to\R$. The {\em matching distance} $d_{match}$ between
$\rho_{(X,\p),k}$ and $\rho_{(X,\s),k}$ is equal to the bottleneck
distance between $D_k(X,\p)$ and $D_k(Y,\s)$, i.e.
\begin{eqnarray}\label{DistMatch}
d_{match}\left(\rho_{(X,\p),k},\rho_{(X,\s),k}\right)=\inf_{\gamma}\max_{p\in
D_k(X,\p)}\|p-\gamma(p)\|_{\widetilde{\infty}},
\end{eqnarray}
where $\gamma$ ranges over all multi-bijections between
$D_k(X,\p)$ and $D_k(X,\s)$, and for every $p=(u,v),q=(u',v')$ in
$\Delta^*$,
$$
\|p-q\|_{\widetilde{\infty}}=
\min\left\{\max\left\{|u-u'|,|v-v'|\right\},\max\left\{\frac{v-u}{2},\frac{v'-u'}{2}\right\}\right\},
$$
with the convention about points at infinity that
$\infty-y=y-\infty=\infty$ when $y\neq\infty$, $\infty-\infty=0$,
$\frac{\infty}{2}=\infty$, $|\infty|=\infty$, $\min\{c,\infty\}=c$
and $\max\{c,\infty\}=\infty$.
\end{defn}

In plain words, $\|\cdot\|_{\widetilde{\infty}}$ measures the
pseudodistance between two points $p$ and $q$ as the minimum
between the cost of moving one point onto the other and the cost
of moving both points onto the diagonal, with respect to the
max-norm and under the assumption that any two points of the
diagonal have vanishing pseudodistance. We observe that in
(\ref{DistMatch}) we can write $\max$ instead of $\sup$, because
by Proposition \ref{Localization} (Localization of Cornerpoints)
proper cornerpoints belong to a compact subset of the closure of
$\Delta^+$.

We are now ready to give the one-dimensional stability theorem for
the rank invariant with continuous functions. The proof relies on
a cone construction. The rationale behind this construction is to
directly apply the arguments used in \cite{dAFrLa} for size
functions, eliminating cornerpoints at infinity, whose presence
would require us to modify all the proofs.

This stability theorem is a different result from the one given in
\cite{CoEdHa07}, weakening the tameness requirement to continuity,
and actually solving one of the open problems posed in that work
by the authors.

\begin{thm}[One-Dimensional Stability Theorem]\label{1-dim}
Let $X$ be a triangulable space, and $\p,\s:X\to\R$ two continuous
functions. Then
$d_{match}(\rho_{(X,\p),k},\rho_{(X,\s),k})\leq\|\p-\s\|_\infty$.
\end{thm}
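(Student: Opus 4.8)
The plan is to follow the classical strategy used in \cite{dAFrLa} for size functions, reducing the statement to a comparison of persistence diagrams through the Representation Theorem \ref{k-triangle}. First I would observe that, by the Representation Theorem, it suffices to bound the bottleneck distance $d_{match}(\rho_{(X,\p),k},\rho_{(X,\s),k})$ by $\|\p-\s\|_\infty=:h$; this amounts to producing a multi-bijection $\gamma$ between $D_k(X,\p)$ and $D_k(X,\s)$ moving every cornerpoint by at most $h$ in the $\|\cdot\|_{\widetilde\infty}$ pseudodistance. The main conceptual device is the cone construction alluded to in the text: one thickens $X$ to a space $\widehat X$ (e.g.\ the cone $CX$, or $X$ glued to a disjoint point or segment) and extends $\p,\s$ to functions $\widehat\p,\widehat\s$ on $\widehat X$ so that the new functions realize, as \emph{proper} cornerpoints, what were previously cornerpoints at infinity — all while keeping $\|\widehat\p-\widehat\s\|_\infty\le h$. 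This reduces the problem to persistence diagrams whose relevant cornerpoints are all proper, so the delicate convention about points at infinity in $\|\cdot\|_{\widetilde\infty}$ need not be handled separately.

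Once in the cone setting, the key inequality to exploit is the Diagonal Jump Lemma \ref{Diagonal}: taking $Y=X$, $f=\mathrm{id}$, so that $\rho_{(X,\widehat\p),k}(u-h,v+h)\le\rho_{(X,\widehat\s),k}(u,v)$ and, by symmetry, $\rho_{(X,\widehat\s),k}(u-h,v+h)\le\rho_{(X,\widehat\p),k}(u,v)$. Feeding these two inequalities into the multiplicity formula (Definition \ref{Proper}) — written on a varying grid exactly as in \cite{FrLa01,dAFrLa}, which is why continuity rather than tameness is enough — one shows that a proper cornerpoint of $\rho_{(X,\widehat\p),k}$ at distance greater than $2h$ from the diagonal forces a proper cornerpoint of $\rho_{(X,\widehat\s),k}$ within an $h$-box of it (in the $\|\cdot\|_\infty$ sense), and vice versa, with matching multiplicities. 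Here the Local Finiteness of Cornerpoints (Proposition \ref{Local}) guarantees that in the region $\{u+\varepsilon<v\}$ there are only finitely many of them, so the matching can be built by a finite combinatorial argument (a Hall-type or direct greedy pairing), leaf by leaf in the parameter $h$; cornerpoints lying within distance $2h$ of $\Delta$ are simply matched to the diagonal at cost $\le h$, which is admissible by the second term in the definition of $\|\cdot\|_{\widetilde\infty}$.

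Assembling these pieces: I would (1) set up the cone space $\widehat X$ and the extensions $\widehat\p,\widehat\s$, checking $\|\widehat\p-\widehat\s\|_\infty\le h$ and that cornerpoints of $\rho_{(X,\p),k}$ correspond bijectively (with multiplicity) to those of $\rho_{(\widehat X,\widehat\p),k}$ that lie off the new "infinity line"; (2) apply Lemma \ref{Diagonal} in both directions to transfer the $\varepsilon$-multiplicities; (3) invoke Propositions \ref{Weps} and \ref{Local} to localize and count cornerpoints in each strip $\{u+\varepsilon<v\}$; (4) build the multi-bijection $\gamma$ strip by strip, sending unmatched near-diagonal points to $\Delta$; and (5) pull $\gamma$ back to $X$ and conclude $d_{match}\le h$. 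The main obstacle I anticipate is step (2)–(4): verifying that the $\varepsilon$-multiplicity inequalities genuinely yield a \emph{matching} — i.e.\ that multiplicities are respected and no cornerpoint is left unpaired — requires the careful limiting argument on the varying grid from \cite{dAFrLa}, and making the cone construction interact cleanly with cornerpoints at infinity (so that their multiplicities are preserved and none are created) is the technically fussy point. Since, as the authors note, all of this runs in complete parallel with the size-function proofs, I would cite \cite{dAFrLa,FrLa01} for the routine verifications and spell out only the places where \v{C}ech homology (via the Right-Continuity Lemma \ref{Right}) or the cone construction is actually used.
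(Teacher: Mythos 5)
Your proposal follows essentially the same route as the paper: reduce to the size-function argument of \cite{dAFrLa} via the cone construction (which converts cornerpoints at infinity into proper cornerpoints), use the Diagonal Jump Lemma together with Propositions \ref{Propagation}, \ref{Weps}, \ref{Localization} and \ref{Local} to build the matching, and pull the optimal multi-bijection back to $X$. The only detail you leave implicit is how the cone "interacts cleanly" with cornerpoints at infinity; the paper resolves this by choosing the apex value $M=3(\max|\p|+\max|\s|)+1$ so large that any optimal matching must send points on the line $v=M$ to points on that same line, which is exactly what makes the pull-back step legitimate.
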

\begin{proof}
In what follows we can assume that $X$ is connected. Indeed, if
$X$ has $r$ connected components $C_1,\dots,C_r$, then the claim
can be proved by induction after observing that
$D_k(X,\p)=\bigcup_{i=1}^r D_k(C_i,\p_{|C_i})$, for every
$k\in\mathbb{Z}$.

For $k=0$, the claim has been proved in \cite[Thm. 25]{dAFrLa}.

Let us now assume $k>0$. Consider the cone on $X$, $\tilde
X=(X\times I)\big/(X\times\{1\})$ (see Figure \ref{cono}).

\begin{figure}[h]
\begin{center}
\psfrag{X}{$X$}\psfrag{X'}{$\tilde{X}$}\psfrag{t=0}{$t=0$}\psfrag{t=1}{$t=1$}
\includegraphics[width=10cm]{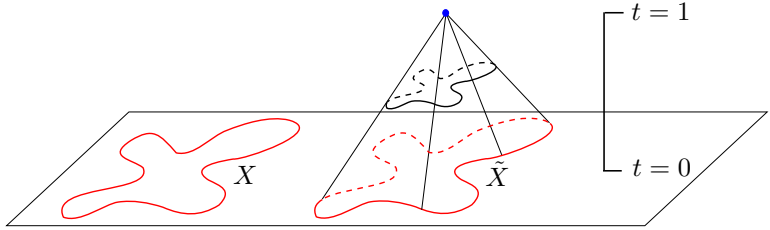}
\caption{The cone construction used in the proof of Theorem
\ref{1-dim}. The cycles in the cone are trivial.}\label{cono}
\end{center}
\end{figure}

Since $X$ is triangulable, so is $\tilde X$. We also consider the
continuous function $\tilde\p:\tilde X\to\R$ taking the class of
$(x,t)$ to the value $\p(x)\cdot(1-t)+M\cdot t$, where
$M=3\cdot(\max|\p|+\max|\s|)+1$. This choice of $M$, besides
guaranteeing that $(u,M)\in\Delta^+$ when
$u\leq\max|\p|,\max|\s|$, will be useful later.

By construction, it holds that
$$
\rho_{(\tilde X,\tilde\p),k}(u,v)=
\left\{%
\begin{array}{ll}
    \rho_{(X,\p),k}(u,v), & \hbox{if $v<M$;} \\
    0, & \hbox{if $v\geq M$.} \\
\end{array}%
\right.
$$
Indeed, it is well known that $\tilde X$ is contractible (see
\cite[Lemma 21.13]{GrHa81}), explaining why $\rho_{(\tilde
X,\tilde\p),k}(u,v)=0$ when $v\geq M$. The other part of the claim
follows from the observation that, for every $v<M$, identifying
$X\langle\p\leq v\rangle\times\{0\}$ with $X\langle\p\leq
v\rangle$, the lower level set $X\langle\p\leq v\rangle$ is a
strong deformation retract of $\tilde X\langle\tilde\p\leq
v\rangle$. To see this, it is sufficient to consider the obvious
retraction $r: (x,t)\mapsto x$ and the deformation retraction
$H:\tilde X\langle\tilde\p\leq v\rangle\times I\to \tilde
X\langle\tilde\p\leq v\rangle$, $H((x,t),s)=(x,t\cdot(1-s))$. This
yields the following commutative diagram

\begin{centering}
\hfill \xymatrix {\check{H}_k(\tilde X\langle\tilde\p\leq
u\rangle)\ar[r]^-{r_k}
\ar[d]_{\pi_k^{(u,v)}}&\check{H}_k(X\langle\p\leq
u\rangle)\ar[d]_{\pi_k^{(u,v)}}
\\ \check{H}_k(\tilde X\langle\tilde\p\leq
v\rangle)\ar[r]^-{r_k}&\check{H}_k(X\langle\p\leq
v\rangle),}\hfill
\end{centering}

\noindent where the horizontal maps are isomorphisms, so that
$\check{H}_k^{(u,v)}(\tilde
X,\tilde\p)\cong\check{H}_k^{(u,v)}(X,\p)$ when $v<M$.

Clearly, a point of $\Delta^+$ is a proper cornerpoint for
$\rho_{(X,\p),k}$ if and only if it is a proper cornerpoint for
$\rho_{(\tilde X,\tilde\p),k}$, with the ordinate strictly less
than $M$. Moreover, a point $(u,\infty)$ of $\Delta^*$ is a
cornerpoint at infinity for $\rho_{(X,\p),k}$ if and only if the
point $(u,M)\in\Delta^+$ is a proper cornerpoint for
$\rho_{(\tilde X,\tilde\p),k}$. We remark that there are no
cornerpoints $(u,v)$ for $\rho_{(\tilde X,\tilde\p),k}$ when
$\max|\p|<v<M$. Analogously, we can construct $\tilde\s:\tilde
X\to\R$ out of $\s$ with the same properties.

Following the same technical steps as in \cite[Thm. 25]{dAFrLa},
simply substituting $\rho_{(X,\p),k}$ for $\rho_{(X,\p),0}$, it is
possible to prove the inequality $d_{match}(\rho_{(\tilde
X,\tilde\p),k},\rho_{(\tilde
X,\tilde\s),k})\leq\|\tilde\p-\tilde\s\|_{\infty}$. To this end,
we need to apply Lemma \ref{Diagonal}, and Propositions
\ref{Propagation}, \ref{Weps}, \ref{Localization} and \ref{Local}.
Therefore, since $\|\tilde\p-\tilde\s\|_\infty=\|\p-\s\|_\infty$,
it is sufficient to show that
$d_{match}(\rho_{(X,\p),k},\rho_{(X,\s),k})\leq
d_{match}(\rho_{(\tilde X,\tilde\p),k},\rho_{(\tilde
X,\tilde\s),k})$.

We can prove that a multi-bijection $\tilde\gamma$ between
$D_k(\tilde X,\tilde\p)$ and $D_k(\tilde X,\tilde\s)$ exists, with
$d_{match}(\rho_{(\tilde X,\tilde\p),k},\rho_{(\tilde
X,\tilde\s),k})=\max_{\tilde p\in D_k(\tilde X,\tilde\p)}\|\tilde
p-\tilde\gamma(\tilde p)\|_{\widetilde{\infty}}$. This can be done
by applying Proposition \ref{Local}, as in \cite[Thm. 28]{dAFrLa}.
Such a $\tilde\gamma$ will be called optimal.

Since $\tilde\gamma$ is optimal, then $\tilde\gamma$ takes each
point $(u,v)\in D_k(\tilde X,\tilde\p)$, with $v=M$, to a point
$(u',v')\in D_k(\tilde X,\tilde\s)$, with $v'=M$. Indeed, if it
were not true, i.e. $\tilde\gamma((u,M))=(u',v')$ with $v'<M$,
then $v'\leq\max|\s|$, and by the choice of $M$, we would have
$\|(u,M)-\tilde\gamma((u,M))\|_{\widetilde{\infty}}\geq\|\p-\s\|_{\infty}$,
contradicting the fact that $d_{match}(\rho_{(\tilde
X,\tilde\p),k},\rho_{(\tilde
X,\tilde\s),k})\leq\|\p-\s\|_{\infty}$. Analogously for
$\tilde\gamma^{-1}$, proving that $\tilde\gamma$ maps cornerpoints
whose ordinate is smaller than $M$ into cornerpoints whose
ordinate is still below $M$.

Let us now consider an optimal multi-bijection $\tilde\gamma$
between $D_k(\tilde X,\tilde\p)$ and $D_k(\tilde X,\tilde\s)$. We
now show that there exists a multi-bijection $\gamma$ between
$D_k(X,\p)$ and $D_k(X,\s)$, such that $\max_{p\in
D_k(X,\p)}\|p-\gamma(p)\|_{\widetilde{\infty}}=\max_{\tilde p\in
D_k(\tilde X,\tilde\p)}\|\tilde p-\tilde\gamma(\tilde
p)\|_{\widetilde{\infty}}$, thus proving that
$d_{match}(\rho_{(X,\p),k},\rho_{(X,\s),k})\leq
d_{match}(\rho_{(\tilde X,\tilde\p),k},\rho_{(\tilde
X,\tilde\s),k})$. Indeed, we can define $\gamma:D_k(X,\p)\to
D_k(X,\s)$ by setting $\gamma((u,v))=\tilde\gamma((u,v))$ if
$v<\infty$, and $\gamma((u,v))=(u',v)$, where $u'$ is the abscissa
of the point $\tilde\gamma((u,M))$, if $v=\infty$. This concludes
the proof.
\end{proof}

\section{Proof of the Multidimensional Stability Theorem \ref{Multidimensional}}

We now provide the proof of the Multidimensional Stability Theorem
\ref{Multidimensional}. It will be deduced following the same
arguments given in \cite{BiCe*08} to prove the stability of
multidimensional size functions. Proofs that are still valid for
$k>0$ without any change will be omitted.

We start by recalling that the following  parameterized family of
half-planes in $\R^n\times\R^n$ is a foliation of $\Delta ^+$.

\begin{defn}[Admissible pairs]\label{Admissible}
\label{np} For every unit vector $\vec{l}=(l_1,\ldots,l_n)$ of
$\mathbb{R}^n$ such that $l_i>0$ for $i=1,\dots,n$, and for every
vector $\vec{b}=(b_1,\ldots,b_n)$ of $\mathbb{R}^n$ such that
$\sum_{i=1}^n b_i=0$, we shall say that the pair
$(\vec{l},\vec{b})$ is \emph{admissible}. We shall denote the set
of all admissible pairs in $\R^n\times\R^n$ by $Adm_n$. Given an
admissible pair $(\vec{l},\vec{b})$, we define the half-plane
$\pi_{(\vec{l},\vec{b})}$ of $\R^n\times\R^n$ by the following
parametric equations:
$$
\left\{%
\begin{array}{ll}
    \vec u=s\vec l + \vec b\\
    \vec v=t\vec l + \vec b\\
\end{array}%
\right.
$$
for $s,t\in \R$, with $s<t$.
\end{defn}

The key property of this foliation is  that the restriction of
$\rho_{(X,\fr),k}$ to each leaf can be seen as a particular
one-dimensional rank invariant, as the following theorem states.

\begin{thm}[Reduction Theorem]\label{Reduction}
Let $(\vec{l},\vec{b})$ be an admissible pair, and $F_{(\vec
l,\vec b)}^{\fr}:X\rightarrow\R$ be defined by setting
$$
F_{(\vec l,\vec
b)}^{\fr}(x)=\max_{i=1,\dots,n}\left\{\frac{\varphi_i(x)-b_i}{l_i}\right\}\
.
$$
Then, for every $(\vec u,\vec v)=(s\vec l+\vec b,t\vec l + \vec
b)\in\pi_{(\vec{l},\vec{b})}$ the following equality holds:
$$
\rho_{(X,\fr),k}(\vec u,\vec v)=\rho_{(X,F_{(\vec l,\vec
b)}^{\fr}),k}(s,t)\ .
$$
\end{thm}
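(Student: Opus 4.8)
The plan is to show that the lower level sets of the scalar function $F^{\fr}_{(\vec l,\vec b)}$ at parameters $s$ and $t$ coincide exactly with the lower level sets of $\fr$ at the multi-parameters $\vec u = s\vec l + \vec b$ and $\vec v = t\vec l + \vec b$. Once this set-theoretic identity is established, the inclusion maps, and hence the induced maps in \v{C}ech homology and their ranks, will be literally the same, which gives the claimed equality of rank invariants.

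\textbf{Step 1: The level set identity.} The key observation is that, for a point $x\in X$ and a real number $s$, one has $F^{\fr}_{(\vec l,\vec b)}(x)\le s$ if and only if $\max_{i}\{(\varphi_i(x)-b_i)/l_i\}\le s$, which (using $l_i>0$ so that the inequalities can be cleared without reversing) is equivalent to $\varphi_i(x)-b_i\le s\,l_i$ for every $i=1,\dots,n$, i.e.\ to $\varphi_i(x)\le s\,l_i+b_i = u_i$ for every $i$, i.e.\ to $x\in X\langle\fr\preceq\vec u\rangle$. Hence
$$
X\langle F^{\fr}_{(\vec l,\vec b)}\le s\rangle = X\langle\fr\preceq\vec u\rangle,
$$
and symmetrically $X\langle F^{\fr}_{(\vec l,\vec b)}\le t\rangle = X\langle\fr\preceq\vec v\rangle$. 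Since $s<t$ on the leaf $\pi_{(\vec l,\vec b)}$, we indeed have $\vec u\prec\vec v$, so $(\vec u,\vec v)\in\Delta^+$ and $(s,t)$ lies in the one-dimensional $\Delta^+$, and everything is well posed.

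\textbf{Step 2: Transfer to homology and conclude.} Because the two pairs of lower level sets coincide as subspaces of $X$, the inclusion $X\langle F^{\fr}_{(\vec l,\vec b)}\le s\rangle\hookrightarrow X\langle F^{\fr}_{(\vec l,\vec b)}\le t\rangle$ is the very same map as $\pi^{(\vec u,\vec v)}:X\langle\fr\preceq\vec u\rangle\hookrightarrow X\langle\fr\preceq\vec v\rangle$. Applying the functor $\check H_k$ yields identical homomorphisms, so $\pi^{(s,t)}_k$ (in the sense of Definition \ref{Rank} for $n=1$ applied to $F^{\fr}_{(\vec l,\vec b)}$) equals $\pi^{(\vec u,\vec v)}_k$. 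Taking ranks gives $\rho_{(X,F^{\fr}_{(\vec l,\vec b)}),k}(s,t)=\rho_{(X,\fr),k}(\vec u,\vec v)$, as desired. One should also note that $F^{\fr}_{(\vec l,\vec b)}$ is continuous, being a finite maximum of continuous functions, so Definition \ref{Rank} applies to it; and $X$ is triangulable as required.

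\textbf{Main obstacle.} There is essentially no deep obstacle here — the content is entirely in the elementary equivalence of Step 1. The only point requiring a little care is the bookkeeping around the parametrization: one must use $l_i>0$ to clear denominators without flipping inequalities, and check that the correspondence $(s,t)\leftrightarrow(\vec u,\vec v)$ between the one-dimensional $\Delta^+$ and the leaf $\pi_{(\vec l,\vec b)}$ is well defined and respects the strict ordering. Since this result is proved in \cite{BiCe*08} for size functions and the argument is purely set-theoretic, it carries over verbatim to $\check H_k$ for arbitrary $k$, so the details may be omitted.
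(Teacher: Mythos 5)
Your proof is correct and is essentially the argument the paper relies on: the paper omits the proof, deferring to \cite{BiCe*08}, where the result is established precisely via the sublevel-set identity $X\langle F^{\fr}_{(\vec l,\vec b)}\le s\rangle = X\langle\fr\preceq s\vec l+\vec b\,\rangle$ that you prove in Step 1. Everything downstream (identical inclusions, identical induced maps in $\check H_k$, equal ranks) follows exactly as you describe.
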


As a consequence of the Reduction Theorem \ref{Reduction}, we
observe that the identity
$\rho_{(X,\fr),k}\equiv\rho_{(X,\vec{\psi}),k}$ holds if and only
if $d_{match}(\rho_{(X,F_{(\vec l,\vec
b)}^{\fr}),k},\rho_{(X,F_{(\vec l,\vec b)}^{\vec\psi}),k})=0$, for
every admissible pair $(\vec{l},\vec{b})$.

The next theorem gives a stability result on each leaf of the
foliation. It represents an intermediate step toward the proof of
the stability of the multidimensional rank invariant across the
whole foliation.

\begin{thm}[Stability w.r.t. Function Perturbations]\label{Stability}
If $X$ is triangulable and $\fr,\vec{\psi}:X\to\R^n$ are
continuous functions, then for each admissible pair
$(\vec{l},\vec{b})$, it holds that
$$
d_{match}(\rho_{(X,F_{(\vec l,\vec b)}^{\fr}),k},\rho_{(X,F_{(\vec
l,\vec b)}^{\vec\psi}),k})\leq\frac{\max_{x\in X}
\|\fr(x)-\vec\psi(x)\|_{\infty}}{\min_{i=1,\dots,n}l_i}.
$$
\end{thm}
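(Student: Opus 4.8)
The plan is to combine the Reduction Theorem \ref{Reduction} with the One-Dimensional Stability Theorem \ref{1-dim}, using the Diagonal Jump Lemma \ref{Diagonal} as the bridge that converts a bound on $\|\fr-\vec\psi\|_\infty$ into a bound on $\|F^{\fr}_{(\vec l,\vec b)}-F^{\vec\psi}_{(\vec l,\vec b)}\|_\infty$. First I would fix an admissible pair $(\vec l,\vec b)$ and observe, by the Reduction Theorem, that $\rho_{(X,F^{\fr}_{(\vec l,\vec b)}),k}$ and $\rho_{(X,F^{\vec\psi}_{(\vec l,\vec b)}),k}$ are genuine one-dimensional rank invariants of the triangulable space $X$ with the continuous scalar functions $F^{\fr}_{(\vec l,\vec b)}$ and $F^{\vec\psi}_{(\vec l,\vec b)}$. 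Then Theorem \ref{1-dim} applies verbatim to give
$$
d_{match}\!\left(\rho_{(X,F^{\fr}_{(\vec l,\vec b)}),k},\rho_{(X,F^{\vec\psi}_{(\vec l,\vec b)}),k}\right)\leq\left\|F^{\fr}_{(\vec l,\vec b)}-F^{\vec\psi}_{(\vec l,\vec b)}\right\|_\infty.
$$

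The core computation, and the only real content beyond citing the two quoted theorems, is the pointwise estimate
$$
\left|F^{\fr}_{(\vec l,\vec b)}(x)-F^{\vec\psi}_{(\vec l,\vec b)}(x)\right|\leq\frac{\|\fr(x)-\vec\psi(x)\|_\infty}{\min_{i}l_i}
$$
for every $x\in X$. This follows from the elementary fact that $|\max_i a_i-\max_i c_i|\leq\max_i|a_i-c_i|$ applied to $a_i=(\varphi_i(x)-b_i)/l_i$ and $c_i=(\psi_i(x)-b_i)/l_i$, together with $|a_i-c_i|=|\varphi_i(x)-\psi_i(x)|/l_i\leq\|\fr(x)-\vec\psi(x)\|_\infty/\min_j l_j$ (using $l_i>0$). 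Taking the maximum over $x\in X$ then yields $\|F^{\fr}_{(\vec l,\vec b)}-F^{\vec\psi}_{(\vec l,\vec b)}\|_\infty\leq\max_{x\in X}\|\fr(x)-\vec\psi(x)\|_\infty/\min_i l_i$, and chaining this with the one-dimensional stability inequality gives exactly the asserted bound.

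I expect no serious obstacle here: the argument is essentially bookkeeping once the Reduction Theorem and one-dimensional stability are in hand. The one place requiring a little care is the max-Lipschitz inequality $|\max_i a_i-\max_i c_i|\leq\max_i|a_i-c_i|$ — it is standard, but one should note it holds because for the index $i^*$ achieving $\max_i a_i$ one has $\max_i a_i-\max_i c_i\leq a_{i^*}-c_{i^*}\leq\max_i|a_i-c_i|$, and symmetrically, so the absolute value is controlled. The positivity hypothesis $l_i>0$ guarantees $\min_i l_i>0$, so the division is legitimate and the functions $F^{\fr}_{(\vec l,\vec b)}$, $F^{\vec\psi}_{(\vec l,\vec b)}$ are well defined and continuous, as needed to invoke Theorem \ref{1-dim}. (Strictly speaking one could alternatively route the estimate through the Diagonal Jump Lemma \ref{Diagonal} with $Y=X$ and $f=\mathrm{id}$, comparing $\rho$ on two leaves, but the direct scalar computation above is cleaner and suffices.)
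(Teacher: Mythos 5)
Your argument is correct and is essentially the proof the paper intends: the paper defers this theorem to \cite{BiCe*08}, where the argument is exactly your reduction, namely the pointwise Lipschitz estimate $\bigl\|F^{\fr}_{(\vec l,\vec b)}-F^{\vec\psi}_{(\vec l,\vec b)}\bigr\|_\infty\leq \max_{x\in X}\|\fr(x)-\vec\psi(x)\|_\infty/\min_i l_i$ followed by an application of the one-dimensional stability theorem (here Theorem \ref{1-dim}, which is the paper's genuinely new ingredient since the functions $F^{\fr}_{(\vec l,\vec b)}$ need not be tame). One minor remark: the Reduction Theorem is not actually needed for this step, since $F^{\fr}_{(\vec l,\vec b)}$ and $F^{\vec\psi}_{(\vec l,\vec b)}$ are continuous scalar functions on the triangulable space $X$ by construction, so Theorem \ref{1-dim} applies directly; the Reduction Theorem only enters afterwards, to interpret the resulting bound as stability of the multidimensional rank invariant on each leaf.
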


We are now ready to deduce the Multidimensional Stability Theorem
\ref{Multidimensional}.

\begin{proof}{\em (of Theorem \ref{Multidimensional})}
We set
$D_{match}(\rho_{(X,\fr),k},\rho_{(X,\vec\psi),k})=\sup_{(\vec
l,\vec b)\in Adm_n}\min_i l_i\cdot d_{match}(\rho_{(X,F_{(\vec
l,\vec b)}^{\fr}),k},\rho_{(X,F_{(\vec l,\vec
b)}^{\vec\psi}),k})$. Then, $D_{match}$  is a distance on
$\{\rho_{(X,\fr),k}\,|\,\vec\p:X\to\R^n\mbox{ continuous}\}$ that
clearly proves the claim.
\end{proof}

Roughly speaking, we have thus proved that small changes in the
vector-valued filtrating function induce small changes in the
associated multidimensional rank invariant, with respect to the
distance $D_{match}$.

\subsection*{Acknowledgements}
The authors  thank Francesca Cagliari (University of Bo\-lo\-gna)
and Marco Grandis (University of Genoa) for their helpful advice.
However, the authors are solely responsible for any errors.

\renewcommand{\thesection}{A}
\setcounter{equation}{0}  % reset counter

\section{Appendix}\label{spherePatologic}
The next example shows that the rank invariant is not
right-continuous in the variable $v$ when singular or simplicial
homology are considered instead of \v{C}ech homology. We recall
that an example concerning the right-continuity in the variable
$u$ has been given in Example \ref{WarsawCircle}.
\begin{ex}\label{exsphere}
Let $S \subset \R^3$ be a sphere parameterized by polar
coordinates $(\alpha,\beta)$, $-\frac{\pi}{2}\leq\alpha \leq
\frac{\pi}{2}$ and $\beta \in [0, 2\pi)$. For every $\beta \in [0,
2\pi)$, consider on $S$ the paths
$\gamma_{\beta}^1:(-\frac{\pi}{2}, 0) \to S$ and
$\gamma_{\beta}^2:(0, \frac{\pi}{2}) \to S$ defined by setting,
for $i=1,2$, $\gamma^i_{\beta}(\alpha)=(\alpha', \beta')$ with
$\alpha'=\alpha$ and $\beta'=(\beta + \cot\alpha)\!\!\mod 2\pi$.
We observe that each point of the set $S^*=\{(\alpha,\beta) \in S:
\alpha \neq 0 \wedge |\alpha| \neq \frac{\pi}{2}\}$ belongs to the
image of one and only one path $\gamma^i_{\beta}$. Such curves
approach more and more a pole of the sphere on one side and the
equator, winding an infinite number of times, on the other side
(see, for instance, in Figure \ref{sphere} $(a)$, the paths
$\gamma^2_{\frac{\pi}{2}}$ and $\gamma^2_{\frac{3\pi}{2}}$ lying
in the northern hemisphere).
\begin{figure}[htbp]
\begin{center}
\psfrag{0}{$0$}\psfrag{1}{$1$}\psfrag{2}{$2$}\psfrag{u}{$u$}\psfrag{v}{$v$}
\psfrag{r}{$\rho_{(S,\p),0}$}\psfrag{D+}{$\Delta^+$}\psfrag{-e}{$\p(\bar
P)=\p(\bar Q)$}\psfrag{P}{$\bar P$}\psfrag{N}{$N$}
\begin{tabular}{cc}
\includegraphics[width=5cm]{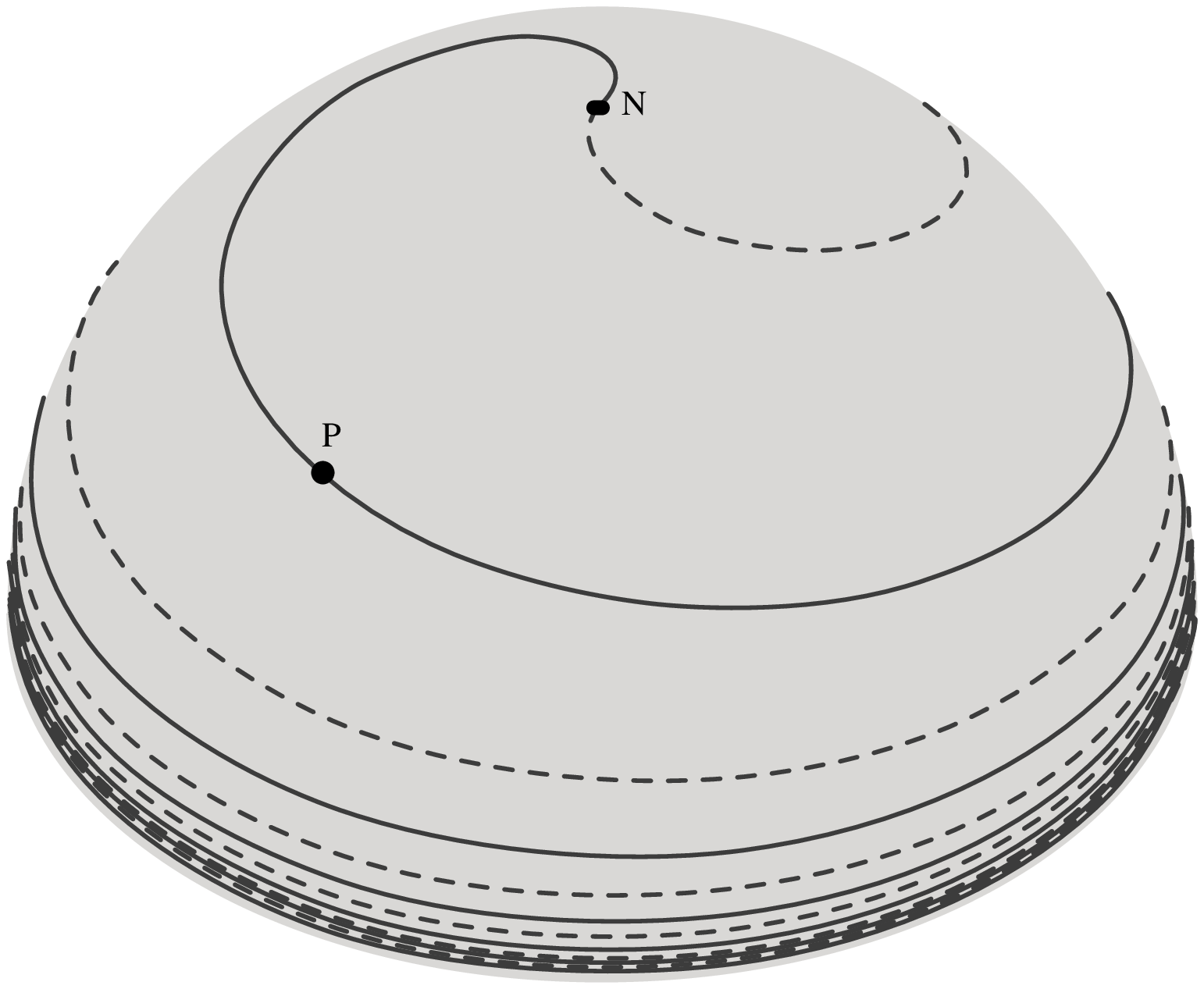}&\includegraphics[width=5cm]{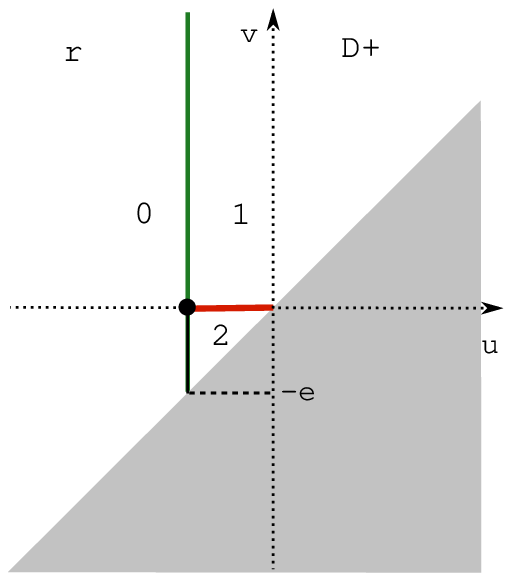}\\
$(a)$&$(b)$\\
\end{tabular}
\caption{$(a)$ Two of the paths covering the northern hemisphere
considered in Example \ref{exsphere}. $(b)$ The $0$th rank
invariant of the pair $(S,\p)$. On the discontinuity points
highlighted in bold red, the 0th rank invariant computed using
singular homology takes value equal to 2, while using \v{C}ech
homology, the value is equal to 1, showing the right-continuity in
the variable $v$.}\label{sphere}
\end{center}
\end{figure}

Then define the $C^{\infty}$ function $\p^*: S^* \to \R$ that
takes each point $P = \gamma^i_{\beta}(\alpha) \in S^*$ to the
value
$\exp\left(-\frac{1}{\alpha^2\left(\frac{\pi}{2}-|\alpha|\right)^2}\right)\sin(\beta)$.
Now extend $\p^*$ to a $C^{\infty}$ function $\p:S\to\R$ in the
unique possible way. In plain words, this function draws a ridge
for $\beta \in (0,\pi)$, and a valley for $\beta \in (\pi,2\pi)$.
Moreover, observe that the points $\bar P\equiv(\frac{\pi}{4},
\frac{3\pi}{2})$ and $\bar Q\equiv(-\frac{\pi}{4},
\frac{3\pi}{2})$ of the sphere are the unique local minimum points
of $\p$.

Let us now consider the $0$th rank invariant of the pair $(S,\p)$.
Its graph is depicted in Figure \ref{sphere} $(b)$. The points
$\bar P$ and $\bar Q$ belong to the same arcwise connected
component of the lower level set
$S\langle\p\leq\varepsilon\rangle$ for every $\varepsilon>0$,
whereas they do not for $\varepsilon=0$, since the paths
$\gamma^i_{\frac{\pi}{2}}$ ($i=1,2$) are an ``obstruction'' to
construct a continuous path from $\bar P$ to $\bar Q$. Hence, the
singular rank invariant $\rho_{(S,\p),0}$ is not right-continuous
in the second variable at $v=0$, for any $u$ with $\min\varphi<u
<0$.
\end{ex}

\bibliographystyle{abbrv}
\bibliography{multistab_arxiv}

\end{document}